\date{}
\def\sing{\qopname\relax o{sing}}
\def\rank{\qopname\relax o{rank}}
\def\cube{\text{\mancube}}
\theoremstyle{theorem}
\newtheorem{theorem}{Theorem}
\newtheorem{lemma}{Lemma}
\newtheorem{proposition}{Proposition}
\theoremstyle{remark}
\newtheorem{remark}{Remark}
\theoremstyle{definition}
\newtheorem{definition}{Definition}
\author {Ivan Dynnikov and Alexandra Skripchenko}
\address{\noindent Steklov Mathematical Institute of Russian Academy of Sciences, 8 Gubkina Str., Moscow 119991, Russia}
\email{dynnikov@mech.math.msu.su}
\address{Faculty of Mathematics, National Research University Higher School of Economics, Vavilova St. 7, 112312 Moscow, Russia}
\email{sashaskrip@gmail.com}
\title[Symmetric band complexes of thin type and chaotic sections]{Symmetric band
complexes of thin type and chaotic sections which are not quite chaotic}
\dedicatory{On the occasion of Yu.\,Ilyashenko's 70th birthdate}
\begin{document}
\maketitle

\begin{abstract}
In a recent paper we constructed a family of foliated 2-complexes of thin type whose typical leaves have two
topological ends. Here we present simpler examples of such complexes that are, in addition, symmetric with respect to an
involution and have the smallest possible rank. This allows for constructing a 3-periodic surface in the
three-space with a plane direction such
that the surface has a central symmetry, and the plane sections of the chosen direction are chaotic
and consist of infinitely many connected components. Moreover, typical
connected components of the sections have an asymptotic direction, which is due to the
fact that the corresponding foliation on the surface in the 3-torus is not uniquely ergodic.
\end{abstract}

\section{Introduction}

Our motivation for this work came from the problem about the asymptotic behavior
of plane sections of triply periodic surfaces in $\mathbb R^3$ posed by S.\,P.\,Novikov in \cite{nov82} in
connection with conductivity theory in monocrystals. The physical model where
such sections appeared was studied by I.\,M.\,Lifshitz and his school in 1950--60s.
The surface in the model is the Fermi surface of a normal metal and is defined as
the level surface of the dispersion law in the space of quasimomenta, which topologically is
a $3$-torus. The Fermi surface of a metal can also be considered as a $3$-periodic surface
in the $3$-space.

The model is designed to study the conductivity in a monocrystal at low temperature
under the influence of a constant and uniform magnetic field $H$.
According to the model the trajectories of electron's quasimomentum
are connected components of the sections of the Fermi surface by planes
perpendicular to~$H$.

Novikov suggested to study plane sections of general null-homologous surfaces
in the $3$-torus and asked what asymptotic properties the unbounded
connected components of such sections may have. The problem
can be considered as one about a foliation induced by a closed $1$-form on a closed
oriented surface, but as such it is very specific as there are serious
restrictions on the cohomology class of the $1$-form.

The first result in this area was obtained by A.\,Zorich who discovered
what is now called the integrable case \cite{zor84}. It was shown later
by I.\,Dynnikov that generically either the 
integrable case occurs or there are no open trajectories (trivial case) \cite{dyn-93}.

For non-generic vectors $H$ whose components are dependent over $\mathbb Z$,
S.\,Tsarev constructed examples that do not
fit into the trivial or integrable case, though minimal components of the
induced foliation on the Fermi surface were of genus~$1$, see \cite{dyn97}. 
A situation in which the foliation has a single minimal component of genus~$3$ and
$H$ is completely irrational
was discovered by I.Dynnikov in \cite{dyn97}. Such examples are now referred to as chaotic.

Physical implications from different types of dynamics of the trajectories for
the conductivity tensor are discussed in \cite{m97,MN}.

After the work \cite{dyn08}, where the construction of \cite{dyn97} was reformulated
in different terms, it became clear that the main instrument for studying chaotic
examples coincided with a particular case of an object that was well known in the geometric
group theory and the theory dynamical systems under the name of band complex,
which is a measured foliated 2-complex of certain type. The theory of such
complexes was developed by E.\,Rips, see \cite{bf95}. In a sense,
constructing examples with chaotic dynamics in Novikov's
problem is equivalent to constructing band complexes of thin type consisting
of three bands.

Several years ago A.\,Maltsev drew first author's attention to the fact that a Fermi surface
of any monocrystal is alway centrally symmetric. So, it is natural to single out
the case when our surface has such a symmetry. For the corresponding band complexes
this means that they must be invariant under an involution flipping the transverse orientation of
the foliation. Symmetric band complexes of thin type, which give examples of chaotic dynamic
on a centrally symmetric surface, are constructed by A.\,Skripchenko in \cite{S1}.

The behavior of chaotic trajectories in Novikov's problem is not well understood in general.
One of the interesting questions is how many trajectories may lie in a single plane.
In the theory of band complexes of thin type this is related to the question
about the number of topological ends of a typical leaf.
A~single topological end would imply a single connected component of a typical chaotic section, and two
topological ends would imply infinitely many components (see \cite{dyn08}, \cite{s13}).
This question about possible typical leaf structure of thin type band complexe is also interesting on its own.

Before recently only examples of thin type band complexes had been known in which
almost all leaves had exactly one topological end \cite{bf95,c10,s13}.
In \cite{ds14} we described the reason for that,
which was the self-similarity of the known examples, and constructed examples
of thin type band complexes having two-ended typical leaves. Those examples did not
obey any symmetry, and it was
not clear for a while whether additional symmetry would be an obstruction for
a band complex to have two-ended typical leaves.

Here we show that not only symmetry but also a certain degeneracy is not an obstruction. Quite surprisingly,
the phenomenon can be observed for band complexes that are related
to the so called regular skew polyhedron $\{4,6\,|\,4\}$,
a surface for which the set of all chaotic regimes was explicitly described by I.\,Dynnikov and R.\,de~Leo in \cite{dd}.
Our construction here appears to be even simpler than in~\cite{ds14}.

We also analyze the corresponding chaotic
dynamics on the surface in the $3$-torus. We show that the induced flow, though being minimal,
decomposes into two ergodic components. This appears to be a reason for the existence of an asymptotic direction of
the trajectories in $\mathbb R^3$. In principle, the proofs of these facts are self-contained and
do not use any band complexes.
However, band complexes provide for a more intuitive way to understand the origin of the construction,
and we start the exposition from introducing them.

\section*{Acknowledgements}
The first author is supported in part by Russian Foundation for Rundamental Research (grant no.~13-01-12469).
The second author is partially supported by Lavrentiev Prix and by the Dynasty Foundation.

\section{Band complexes}

We start by recalling basic definitions.

\begin{definition}
\emph{A band} is a (possibly degenerate) rectangular $\mathcal B=[a,b]\times[0,1]\subset\mathbb R^2$, $a\leqslant b$,
endowed with the $1$-form $dx$,
where $x$ is the first coordinate in the plane $\mathbb R^2$.
The horizontal sides $[a,b]\times\{0\}$ and $[a,b]\times\{1\}$ are called \emph{the bases} of the band;
the band is \emph{degenerate} if
$a=b$. The value $(b-a)$ is called \emph{the width} of the band and denoted $|\mathcal B|$.

\begin{definition}
\emph{A band complex} is a $2$-complex $X$ endowed with a closed $1$-form $\omega$ obtained from a union~$D$
of pairwise disjoint closed (possibly degenerate to a point) intervals
of $\mathbb R$, called \emph{the support multi-interval of~$X$},
and several pairwise disjoint bands $\mathcal B_i=[a_i,b_i]\times[0,1]$ by gluing each base
of every band isometrically and preserving the orientation to a closed subinterval of $D$. The form $\omega$ is
the one whose restriction to each band and to $D$ is $dx$, so, we keep using notation $dx$ for it.

The $1$-form $dx$ defines \emph{a singular foliation $\mathcal F_X$} on $X$ whose \emph{leaves} are maximal
path connected subsets of $X$ the restriction of $dx$ to which vanishes.
\emph{Singularities} of $\mathcal F_X$ are such points $p\in X$ that the restriction of
$\mathcal F_X$ to any open neighborhood of $p$ is not a fibration over an open interval. It is easy to see
the set of singular points is the union of vertical sides of all the bands. Leaves
containing a singularity are called \emph{singular}, and otherwise \emph{regular}.

A band complex $Y$ is called \emph{annulus free} if all regular leaves are simply connected.
\end{definition}

\begin{definition}
The dimension
$$\dim_{\mathbb Q}\Bigl\{\int_cdx\;;\;c\in H_1(X,\sing(X);\mathbb Z)\Bigr\},$$
where $\sing(X)$ is the set of all singularities of $\mathcal F_X$, is called
the \emph{rank} of a band complex $X$ and denoted $\rank(X)$.
\end{definition}

\end{definition}

\begin{remark}
Our definition of a band complex is less general than appears in geometric group theory as an instrument for describing actions of free groups on $\mathbb R$-trees (see \cite{bf95} for details). Band complexes also appear as suspension complexes for a generalization of interval exchange transformations (more precisely, it is an analogue of Veech's construction of zippered rectangles, see \cite{V}).
\end{remark}

\begin{definition}
Let $Y_1$ and $Y_2$ be band complexes with support multi-intervals $D_1$ and $D_2$, respectively.
We say that they are \emph{isomorphic}
if there is a homeomorphism $f:Y_1\rightarrow Y_2$ (called then \emph{an isomorphism from $Y_1$ to $Y_2$})
such that we have $f^*(dx)=dx$.
If, additionally, $Y_1$ has minimal possible number of bands among all band complexes
isomorphic to $Y_2$ and we have $f(D_1)\subset D_2$, then the image $f(\mathcal B)$ of any band~$\mathcal B$ of $Y_1$ is called
\emph{a long band of $Y_2$}.
\end{definition}

\begin{definition}
A band complex $X$ is \emph{symmetric} if there exists an involution $\tau:X\rightarrow X$ such that it takes
bands to bands and we have $\tau^*(dx)=-dx$.
\end{definition}

\begin{definition}
\emph{An enhanced band complex} is a band complex $Y$ together with an assignment of a positive
real number to each band. This number is called \emph{the length of the band}.

A band of width $w$ and length $\ell$ is said \emph{to have dimensions $w\times\ell$}.
The product $w\ell$ will be referred to as \emph{the area} of the band.
\emph{The length} of a long band $\mathcal B$ is the sum of the lengths of all bands contained in~$\mathcal B$.

Each band $\mathcal B$ of an enhanced band complex $Y$ will be endowed
with the measure $\mu_Y$ obtained from the standard
Lebesgue measure on $\mathcal B\subset\mathbb R^2$ by a rescaling sa as to
have the total measure of $\mathcal B$ equal to its area.

Two enhanced band complexes $Y_1$ and $Y_2$ are \emph{isomorphic} if there exist an isomorphism $Y_1\rightarrow Y_2$ that preserves the lengths of long bands.
\end{definition}

\begin{definition}
Let $Y$ be an enhanced band complex with support multi-interval $D$.
\emph{A~free arc} of~$Y$ is a maximal open interval $J\subset D$
such that it is covered by one of the bases of bands, and all other bases are disjoint from $J$.

Let $J$ be a free arc and $\mathcal B=[a,b]\times[0,1]$ be the band one of whose bases covers $J$
under the attaching map. Let $(c,d)\subset[a,b]$ be the subinterval such that $(c,d)\times\{0\}$
or $(c,d)\times\{1\}$ is identified with $J$ in~$Y$. Let $Y'$ be the band complex obtained from $Y$ by removing
$J$ from $D$, and $(c,d)\times[0,1]$ from $\mathcal B$ thus
replacing $\mathcal B$ with two smaller bands $\mathcal B'=[a,c]\times[0,1]$ and
$\mathcal B''=[d,b]\times[0,1]$
whose bases are attached to $D$ by the restriction of the attaching maps for the bases of $\mathcal B$.
If this produces an isolated point of $D$ such that only one degenerate band is attached to it
(which may occur if $a=c$ or $b=d$), the point and the band are removed.
We then say that $Y'$ is obtained from $Y$ by \emph{a collapse from a free arc}, see Fig.~\ref{collapse}.

If $Y$ is an enhanced band complex then the lengths of $\mathcal B'$ and $\mathcal B''$ are set to that of $\mathcal B$.
\begin{figure}[ht]
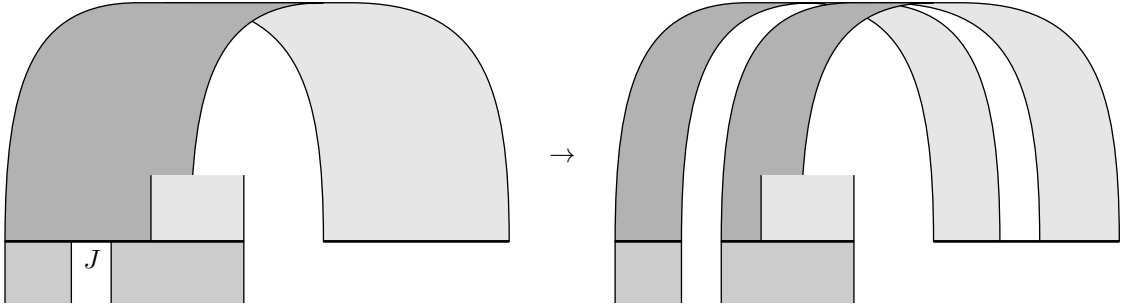

\centerline{\includegraphics{collapse1.eps}\put(-166,15){$J$}\quad
\raisebox{55pt}{$\rightarrow$}\quad
\includegraphics{collapse2.eps}}
\caption{Collapse from a free arc}\label{collapse}
\end{figure}
\end{definition}

\begin{definition}\label{thindef}
An annulus free band complex $Y$ is said to be \emph{of thin type}
if the following two conditions hold:
\begin{enumerate}
\item
every leaf of the foliation $\mathcal F_Y$ is everywhere dense in $Y$;
\item there is an infinite sequence $Y_0=Y,Y_1,Y_2,\ldots$ in which every
$Y_i$, $i\geqslant1$ is a band complex obtained from $Y_{i-1}$ by a collapse from a free arc
(such a sequence is said \emph{to be produced by the Rips machine}).
\end{enumerate}
\end{definition}

\begin{remark}
Again, we use a particular case of a more general notion of a band complex of thin type, which need not necessarily be annulus free.
For a full description of the Rips machine see~\cite{bf95}.
\end{remark}

From the general theory of the Rips machine \cite{bf95} one can extract the following.

\begin{proposition}\label{thin-criteria}
Let $Y$ be a band complex made of three bands. Then the following conditions are equivalent:
\begin{enumerate}
\item $Y$ is of thin type;
\item all leaves of $\mathcal F_Y$ are infinite trees that are not quasi-isometric to a straight line;
\item there are uncountably many leaves of $\mathcal F_Y$ that are
not quasi-isometric to a point, to a straight line, or to a plane.
\end{enumerate}
\end{proposition}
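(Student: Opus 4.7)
\medskip

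\noindent\textbf{Plan of proof.} I would derive the proposition from the Rips machine classification theorem of Bestvina--Feighn~\cite{bf95}, which decomposes any band complex into subcomplexes of four standard types---simplicial, surface, toral (axial), and thin---with well-understood leaf structures: in the simplicial type leaves are finite trees, in the toral type they are quasi-isometric to the real line, in the surface type they are quasi-isometric to a plane or to a line (depending on the topology of the associated surface), and in the thin type they are infinite trees of unbounded branching, hence not quasi-isometric to a point, to a line, or to a plane.

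In our setting this classification reduces to a trichotomy. The toral type is excluded by the annulus-free hypothesis built into Definition~\ref{thindef} and into the hypothesis of Proposition~\ref{thin-criteria}, and with only three bands $Y$ cannot admit a non-trivial decomposition into subcomplexes of different types; this requires a short case analysis of how three bands can be attached to the support multi-interval. Consequently $Y$ is entirely of one of the three remaining types, and conditions (1), (2), (3) are three alternative ways to single out the thin one.

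For (1)$\Rightarrow$(2), the leaf-density condition of Definition~\ref{thindef} together with annulus-freeness forces every leaf to be an infinite tree, and the thin-type structure theorem rules out quasi-isometry to a straight line. For (2)$\Rightarrow$(3), leaves are parametrized by the uncountable support multi-interval $D$ with countable fibers, so there are uncountably many of them, and a branching infinite tree is neither quasi-isometric to a point, nor to a line, nor to a plane. For (3)$\Rightarrow$(1), in either the simplicial or the surface alternative all but countably many leaves would be quasi-isometric to a point, a line, or a plane, contradicting (3).

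The main technical obstacle is the reduction to the trichotomy: one must rule out partial Rips decompositions mixing types, and, for condition~(2), upgrade the classification (usually phrased for generic leaves) to a statement covering every leaf. Both points are manageable here because three bands leave very little combinatorial room, and because a collapse from a free arc preserves the quasi-isometric type of every non-compact leaf, so that the ``thin'' conclusion passes from generic leaves to singular ones as well.
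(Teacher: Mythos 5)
The paper does not write out a proof of Proposition~\ref{thin-criteria}; it is stated as a consequence that ``can be extracted'' from the general Rips-machine theory of~\cite{bf95}. Your plan---invoke the Bestvina--Feighn decomposition into simplicial, surface, toral, and thin pieces, check that three bands leave no room for a mixed decomposition, and read off leaf structures per type---is exactly the route the authors intend, so the overall approach matches what the citation points to.

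Two points in the sketch are imprecise. First, leaves of a band complex are one-dimensional, so in the surface case they are quasi-isometric to $\mathbb R$ (two-ended, bounded-width trees), never to a plane; the word ``plane'' appears in condition~(3) only so that the statement lines up with Proposition~\ref{chaotic-criteria}, where the relevant leaves sit in a $3$-manifold $N_\pm$. Second, and more substantively, your device for upgrading the generic-leaf conclusions of~\cite{bf95,g96} to ``all leaves'' in condition~(2)---the claim that a collapse from a free arc preserves the quasi-isometric type of every non-compact leaf---is not correct as stated: a collapse deletes a family of open arcs from each leaf, and for the finitely many exceptional (singular) leaves the trace on the subcomplex need not be connected, let alone quasi-isometrically embedded with uniform constants. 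A cleaner way to see that every leaf of a thin complex is an infinite tree is the band-length argument the paper itself deploys later, in the proof of Proposition~\ref{prop-two-end}: under the Rips machine the band lengths grow without bound, whereas a cycle contained in a leaf would have to persist with its length fixed, a contradiction; everywhere-density of leaves then rules out finite leaves, and the thin-type dichotomy of~\cite{g96} (uncountably many one-ended and uncountably many two-ended leaves, with unbounded decoration) is what excludes quasi-isometry to a line. Your framework is the right one, but the passage from generic to all leaves needs this sharper input rather than the invariance of quasi-isometry type under collapse.
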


The first example of a  band complex of thin type was constructed by G.Levitt~\cite{l93}.

In \cite{g96} D.Gaboriau asked a question about possible number of topological ends of orbits (or, equivalently, leaves) in the
thin case. It was noted by M.Bestvina and M.Feighn in \cite{bf95} and D.Gaboriau in~\cite{g96}
that all but finitely many leaves of a band complex
of thin type are quasi-isometric to infinite trees with at most two topological
ends, and shown that one-ended and two-ended leaves are always present
and, moreover, there are uncountably many leaves of both kinds.

In \cite{ds14} we constructed the first example when almost all orbits are trees with exactly two topological ends.
However, due to the physical origin of our problem we are also interested to see if such band complexes
exist among symmetric ones.

Below we construct an example with the required symmetry and, in addition,
the highest possible level of degeneracy (just two singular leaves).
The rank of the complex in our example is equal to $3$, the smallest possible
as one can show.

More precisely, we have the following

\begin{theorem}\label{comp}
There exist uncountably many symmetric band complexes $Y$ such that:
\begin{enumerate}
\item $Y$ consists of $3$ bands;
\item $Y$ has rank $3$;
\item $Y$ is of thin type;
\item almost any leave $\mathcal F_Y$ is a 2-ended tree.
\end{enumerate}
\end{theorem}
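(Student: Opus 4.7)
The plan is to exhibit an explicit family of symmetric band complexes with three bands, parametrized by an uncountable set of real parameters, and then verify conditions (1)--(4) in turn.

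First I would set up the construction. Since the involution $\tau$ reverses transverse orientation and the complex has three bands, exactly one band must be fixed by $\tau$ (with its two bases swapped) while the other two are exchanged by $\tau$. I would fix the combinatorial gluing pattern of the three bands onto the support multi-interval $D$ so that $\tau$ extends to an isometric involution of the whole complex $X$ with $\tau^*(dx)=-dx$; the combinatorial data can be guided by the $\{4,6\,|\,4\}$ surface mentioned in the introduction. The remaining freedom is in the widths of the bands and the positions of the attachment maps, reduced by symmetry to a few real parameters. Condition (1) is then built in, and condition (2) follows by requiring that the periods of $dx$ on a set of generators of $H_1(X,\sing(X);\mathbb Z)$ span a $\mathbb Q$-vector space of dimension $3$: this is a full-measure (indeed generic) condition on the parameters, and the upper bound on $\rank(X)$ is $3$ since there are only $3$ bands.

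To establish condition (3), by Proposition~\ref{thin-criteria} it is enough to produce the infinite sequence of free-arc collapses demanded by Definition~\ref{thindef}, together with density of every leaf. I would arrange the parameters so that the Rips machine \emph{renormalizes the complex to a rescaled copy of itself}: after some fixed number of free-arc collapses, the resulting enhanced band complex is isomorphic to the original one with widths rescaled by some primitive matrix $M\in GL_3(\mathbb Z)$. Parameters lying on the one-sided stable set of the Perron--Frobenius eigendirection of $M$ yield an infinite admissible Rips word, hence an infinite collapse sequence; density of every leaf in $X$ follows from primitivity of the substitution, exactly as in the minimality argument for a self-similar interval exchange. Uncountability arises in the usual way from prescribing different admissible infinite words compatible with the symmetry $\tau$.

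The real content is (4). By \cite{bf95,g96} almost every leaf of a thin band complex has one or two topological ends, so the task is to rule out one-endedness on a set of full measure. The strategy I would follow is the one from \cite{ds14}: detect \emph{non-unique ergodicity} of the measured foliation $\mathcal F_Y$. Concretely, the associated symmetric interval exchange should carry two ergodic transverse invariant probability measures rather than one; this can be verified by a Masur--Veech--Boshernitzan type analysis of the Rauzy--Veech cocycle associated with the self-similar renormalization above, showing that the relevant entries of the cocycle iterates fail the Veech criterion for unique ergodicity. Given such a splitting, the complex decomposes measure-theoretically into two leaf-saturated subsets $X_1,X_2$ of positive measure, and a typical leaf lies in exactly one $X_i$ but its two topological ends accumulate onto the common measure-theoretic boundary in two distinct ways, forcing precisely two ends. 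The main obstacle, as in \cite{ds14}, is this last step: converting the two-dimensional structure of the cone of invariant transverse measures into a statement about ends of individual leaves. I expect the proof to proceed through a symmetric variant of the argument in \cite{ds14}, with the involution $\tau$ automatically exchanging the two ergodic components and thereby simplifying (rather than obstructing) the accounting.
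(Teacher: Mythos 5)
Your plan misses the central mechanism of the paper and, more seriously, the renormalization scheme you propose for condition (3) is precisely the kind of construction that the paper explains \emph{cannot} satisfy condition (4).

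For (3) you arrange the widths so that the Rips machine renormalizes the complex to a rescaled copy of itself via a single primitive matrix $M$. This is the self‑similar (eventually periodic) case. But the paper (and \cite{ds14}) identifies self‑similarity as exactly the reason the earlier examples \cite{bf95,c10,s13} have \emph{one‑ended} typical leaves: under a fixed contracting substitution the total area retained after $n$ Rips collapses decays like $\rho^n$ for some fixed $\rho\in(0,1)$, so the ``core'' $\bigcap_i Z_i$ has measure zero, and almost every leaf is one‑ended. What the paper does instead is to use a \emph{non‑periodic} sequence of Rips moves indexed by $k_0,k_1,\dots$ growing so fast ($k_{i+1}\ge 2k_i$, more generally $\sum 1/k_i<\infty$) that the areas $S_i$ satisfy $S_{i+1}>(1-2/k_i)S_i$ and $\lim_i S_i>0$ (Proposition~\ref{prop-two-end}). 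The positivity of the core measure is the whole point, and it is incompatible with your self‑similar ansatz.

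Your route to (4) then compounds the problem. You propose to detect \emph{non‑unique ergodicity} of $\mathcal F_Y$ and infer two‑endedness from a two‑dimensional cone of invariant transverse measures, admitting yourself that the link from ``two ergodic components'' to ``two ends'' is the gap. The paper's argument goes the opposite way: Lemma~\ref{w-exist} (a Hilbert‑projective‑metric contraction argument) shows $\mathcal F_Y$ has a \emph{unique} transverse invariant measure, so any measurable leaf‑saturated set has measure $0$ or full measure. Since $\bigcap_i Z_i$ has positive measure and, by general Rips‑machine theory \cite{bf95}, is disjoint from one‑ended leaves, the union of two‑ended leaves has positive and hence full measure. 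In short: unique ergodicity of $\mathcal F_Y$ plus positive core measure, not non‑unique ergodicity. (Also note: a self‑similar IET/band complex is uniquely ergodic by the usual Veech criterion, so your two pieces would contradict each other even internally.) The non‑unique ergodicity that does appear in the paper (Proposition~\ref{123}) concerns the foliation $\mathcal F_M$ on the \emph{surface} in $\mathbb T^3$, not the band‑complex foliation $\mathcal F_Y$, and it is used for a different conclusion (existence of an asymptotic direction of the planar trajectories), not for Theorem~\ref{comp}(4).

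Two smaller points. Your claim that symmetry forces exactly one $\tau$‑fixed band and a swapped pair is false: in the paper's $Z(\overrightarrow w)$ the involution flips $D$ and each of the three bands $\mathcal B_i$ is taken to itself with its two bases swapped. And the upper bound $\rank\le 3$ is not ``because there are three bands'' in general; it holds here because all periods of $dx$ lie in the $\mathbb Q$‑span of $w_1,w_2,w_3$ for this particular gluing pattern, which must be checked.
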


This theorem will be derived from Proposition~\ref{prop-two-end} below.

We use notation $\overrightarrow\ell$, $\overrightarrow{\ell'}$, $\overrightarrow{\ell_k}$, $\overrightarrow w$,
$\overrightarrow{w'}$ and $\overrightarrow{w_k}$ for
$$\begin{pmatrix}\ell_1&\ell_2&\ell_3&\ell_4\end{pmatrix},\
\begin{pmatrix}\ell_1'&\ell_2'&\ell_3'&\ell_4'\end{pmatrix},\
\begin{pmatrix}\ell_{k1}&\ell_{k2}&\ell_{k3}&\ell_{k4}\end{pmatrix},\
\begin{pmatrix}w_1\\w_2\\w_3\end{pmatrix},\
\begin{pmatrix}w_1'\\w_2'\\w_3'\end{pmatrix},\text{ and }
\begin{pmatrix}w_{1k}\\w_{2k}\\w_{3k}\end{pmatrix},$$
respectively. All the coordinates of these columns and rows will be positive reals.

Let $Z(\overrightarrow w,\overrightarrow\ell)$ be an enhanced band complex shown in Fig.~\ref{z(w,l)}.
\begin{figure}[h]
\centerline{\includegraphics[scale=0.4]{bands1.eps}\put(-120,10){\small$w_1+w_2+w_3$}%
\put(-150,40){$\mathcal B_4$}\put(-70,140){$\mathcal B_1$}\put(-40,28){$\mathcal B_2$}\put(-193,155){$\mathcal B_3$}%
\put(-13,80){\begin{sideways}\small identify\end{sideways}}}
\caption{The band complex $Z(\protect\overrightarrow w,\protect\overrightarrow\ell)$}\label{z(w,l)}
\end{figure}
It consists of four
bands $\mathcal B_1$, $\mathcal B_2$, $\mathcal B_3$, and~$\mathcal B_4$ having dimensions $w_1\times\ell_1$,
$w_2\times\ell_2$, $w_3\times\ell_3$,
and $w_4\times\ell_4$, respectively.

Now we define:
\begin{equation}\label{ABmatrices}\begin{aligned}
A(k)&=\begin{pmatrix}
0&0&1&k\\
1&0&0&0\\
0&1&0&0\\
0&1&1&k-1
\end{pmatrix},\qquad
B(k)&=\begin{pmatrix}
k&k&1\\
1&0&0\\
0&1&0
\end{pmatrix}.\end{aligned}\end{equation}
We identify matrices and the linear transformations they define.

Denote: $\mathbb R_+=(0,\infty)$.

\begin{lemma}\label{lem-Rips-step}
Let $\overrightarrow\ell,\overrightarrow{\ell'}\in(\mathbb R_+)^4$, $\overrightarrow w,\overrightarrow{w'}\in(\mathbb R_+)^3$ be related as follows:
$$\overrightarrow\ell A(k)=\overrightarrow{\ell'},\quad
\overrightarrow w=B(k)\overrightarrow{w'},$$
where $k$ is natural number.
Then the enhanced band complex $Z(\overrightarrow{w'},\overrightarrow{\ell'})$ is isomorphic
to one obtained from $Z(\overrightarrow w,\overrightarrow\ell)$ by several collapses from
a free arc.
\end{lemma}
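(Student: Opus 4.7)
The plan is to exhibit an explicit sequence of collapses from free arcs that carries $Z(\overrightarrow w,\overrightarrow\ell)$ to an enhanced band complex isomorphic to $Z(\overrightarrow{w'},\overrightarrow{\ell'})$, and then to check that the widths and lengths transform exactly by $B(k)$ and $A(k)$ respectively. First I would analyze Figure~\ref{z(w,l)} carefully: the bands $\mathcal B_1,\mathcal B_2,\mathcal B_3$ are attached by one base each to the common support interval of length $w_1+w_2+w_3$, while the fourth band $\mathcal B_4$ is attached via the ``wrap-around'' identification. From this picture one reads off which subintervals of the common support are free arcs, i.e.\ covered by exactly one base.

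The structure of $B(k)$ dictates the partition: since $w_1=kw_1'+kw_2'+w_3'$, $w_2=w_1'$, $w_3=w_2'$, the part of the common base under $\mathcal B_1$ must be sliced into $2k+1$ consecutive subintervals of widths $w_1',w_2',w_1',w_2',\ldots,w_1',w_2',w_3'$, while the parts under $\mathcal B_2$ and $\mathcal B_3$ simply inherit the widths $w_1'$ and $w_2'$. The columns of $A(k)$ then predict what the new long bands will be: reading $(0,1,0,0)^{\mathrm T}$, $(0,0,1,1)^{\mathrm T}$, $(1,0,0,1)^{\mathrm T}$ and $(k,0,0,k-1)^{\mathrm T}$, the new bands $\mathcal B_1',\mathcal B_2',\mathcal B_3',\mathcal B_4'$ should come out as a copy of old $\mathcal B_2$, a concatenation of old $\mathcal B_3$ with a piece of $\mathcal B_4$, a concatenation of old $\mathcal B_1$ with a piece of $\mathcal B_4$, and a ``zigzag'' that traverses $\mathcal B_1$ exactly $k$ times and $\mathcal B_4$ exactly $k-1$ times. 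Since a collapse from a free arc splits a band into two subbands carrying the same length as the parent, it is consistent that the same old band appears in several columns of $A(k)$.

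Concretely, I would argue by induction on $k$. The base case $k=1$ is a single ``basic'' collapse: cut $\mathcal B_1$ along the boundary of the attaching region of $\mathcal B_4$, producing two pieces, one of which glues onto $\mathcal B_4$ to form a new longer band, yielding a complex isomorphic to $Z(\overrightarrow{w'},\overrightarrow{\ell'})$ for $k=1$. For the inductive step, after performing this basic move one observes that a new free arc of the same combinatorial type reappears, and an additional collapse realises the passage from $k$ to $k+1$: it multiplies the width vector by $B(1)^{-1}$ on the left and the length row by $A(1)$ on the right, while accumulating one extra traversal of $\mathcal B_1$ in the zigzag that will become $\mathcal B_4'$. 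Iterating $k$ times and then relabelling the four remaining bands produces a complex with widths $\overrightarrow{w'}$ and lengths $\overrightarrow{\ell'}$ as required.

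The main obstacle will be the bookkeeping: tracking which subrectangle of which original band ends up inside which new long band, and verifying that the combinatorial attaching pattern after the last collapse is genuinely that of Figure~\ref{z(w,l)} with primed data (rather than some different four-band complex with the same numerical invariants). A labelled picture of the basic collapse together with a careful description of the recurrence it initiates should reduce the verification of both $B(k)$ and $A(k)$ to a routine check, and the isomorphism with $Z(\overrightarrow{w'},\overrightarrow{\ell'})$ then follows by matching the bands one by one.
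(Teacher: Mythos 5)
Your overall plan---exhibit an explicit collapse sequence and check that the widths and lengths transform by $B(k)$ and $A(k)$---is exactly what the paper does, except that the paper skips the organisation entirely and just displays the result of all the collapses in a single figure (Fig.~\ref{collapses}, where a bracket marked ``$k-1$'' indicates a repeated portion of the final complex). Your reading of the matrices is sound: the $(1,4)$-entry $k$ and $(4,4)$-entry $k-1$ of $A(k)$ do say that the new band $\mathcal B_4'$ traverses the old $\mathcal B_1$ $k$ times and the old $\mathcal B_4$ $k-1$ times, and the slicing of the base under $\mathcal B_1$ into $2k+1$ pieces with widths $w_1',w_2',\ldots,w_1',w_2',w_3'$ is the right picture.

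However, the inductive mechanism you propose has a concrete gap. You suggest that the passage from $k$ to $k+1$ is realised by a step that multiplies the width column by $B(1)^{-1}$ and the length row by $A(1)$; iterating $k$ times from the case $k=1$ would then require $B(1)^k=B(k)$ and $A(1)^k=A(k)$. Neither identity holds: already for $k=2$ one has
$$B(1)^2=\begin{pmatrix}2&2&1\\1&1&1\\1&0&0\end{pmatrix}\ne\begin{pmatrix}2&2&1\\1&0&0\\0&1&0\end{pmatrix}=B(2),$$
and similarly $A(1)^2\ne A(2)$ (the first row of $A(1)^2$ is $(0,2,1,0)$, not $(0,0,1,2)$). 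In fact $B(k)$ depends on $k$ additively, $B(k)=B(1)+(k-1)E_{11}+(k-1)E_{12}$, so no fixed matrix $E$ satisfies $B(k)=E^{k-1}B(1)$ either. The point is that the parameter $k$ does not arise from iterating a $k=1$ move on complexes of the form $Z(\cdot,\cdot)$; the intermediate stages of the collapse sequence are band complexes of a different combinatorial type, and only after the full sequence does one recover a complex isomorphic to $Z(\overrightarrow{w'},\overrightarrow{\ell'})$. So the bookkeeping obstacle you correctly anticipate is not resolved by the induction as stated; you would either need to track a family of auxiliary intermediate complexes with their own width/length vectors, or do what the paper does and compute the result of the whole collapse sequence in one go.
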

\begin{proof}
It is illustrated in Fig.~\ref{collapses}, where the result of the collapses is shown. One can see that
the obtained band complex is isomorphic to~$Z(\overrightarrow{w'},\overrightarrow{\ell'})$, and $\mathcal B_i'$, $i=1,2,3,4$, are the new bands.
\begin{figure}
\centerline{\includegraphics[scale=0.4]{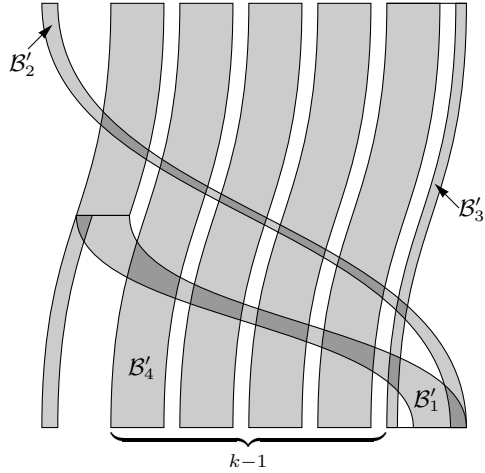}\put(-23,100){\small$\mathcal B_3'$}\put(-40,28){\small$\mathcal B_1'$}%
\put(-193,155){\small$\mathcal B_2'$}\put(-148,40){\small$\mathcal B_4'$}%
\put(-154.5,19){$\underbrace{\hskip3.65cm}_{k-1}$}
}
\caption{Running the Rips machine}\label{collapses}
\end{figure}%
\end{proof}

\begin{lemma}\label{w-exist}
Let $k_0,k_1,k_2,\ldots$ be an arbitrary infinite sequence of natural numbers. Then
there exists an infinite sequence $\overrightarrow{w_0},\overrightarrow{w_1},\overrightarrow{w_2},\ldots$
of points from $(\mathbb R_+)^3$ such that
$$\overrightarrow{w_i}=B(k_i)\overrightarrow{w_i}{}_{+1}.$$
Such a sequence is unique up to scale.
\end{lemma}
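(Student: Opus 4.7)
The plan is to treat this as a Perron--Frobenius-type result for the inhomogeneous product of the nonnegative matrices $B(k_i)$, using compactness for existence and Birkhoff's contraction theorem for uniqueness.

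The driving observation is that the triple product $B(k_n)B(k_{n+1})B(k_{n+2})$ has every entry strictly positive for any $k_n, k_{n+1}, k_{n+2}\in\mathbb{N}$. This is a short direct computation: $B(a)B(b)$ already has the strictly positive first column $(ab+a,\,b,\,1)^{\top}$, so one more application of $B(c)$ produces a fully positive matrix. Equivalently, the directed graph on $\{1,2,3\}$ with $j\to i$ whenever $B(k)_{ij}>0$ is strongly connected and carries a loop at vertex~$1$, so $B(k)$ is primitive and three steps fill all entries. Consequently every such triple product is a strict Birkhoff contraction of the Hilbert projective metric $d_H$ on the interior of $\bar\Delta:=\{\overrightarrow w\geq 0\,:\,w_1+w_2+w_3=1\}$, while each individual $B(k_i)$ is only a $d_H$-non-expansion.

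\emph{Existence.} For each $N$, set $\overrightarrow{w_N}^{(N)}:=(1,1,1)^{\top}$ and propagate backwards $\overrightarrow{w_i}^{(N)}:=B(k_i)\overrightarrow{w_{i+1}}^{(N)}$; every $\overrightarrow{w_i}^{(N)}$ is strictly positive. Rescale so that $\overrightarrow{w_0}^{(N)}\in\bar\Delta$. For each fixed $i$ the vectors $\overrightarrow{w_i}^{(N)}=(B(k_0)\cdots B(k_{i-1}))^{-1}\overrightarrow{w_0}^{(N)}$ remain uniformly bounded in~$N$, so a diagonal compactness argument produces a subsequence along which $\overrightarrow{w_i}^{(N)}\to\overrightarrow{w_i}\in\overline{(\mathbb{R}_+)^3}$ for every~$i$ simultaneously, and $\overrightarrow{w_i}=B(k_i)\overrightarrow{w_{i+1}}$ by continuity. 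The identity $\overrightarrow{w_i}=B(k_i)B(k_{i+1})B(k_{i+2})\overrightarrow{w_{i+3}}$, together with the positivity of the triple product and the non-vanishing of $\overrightarrow{w_{i+3}}$ (its $L^1$-norm is a limit of strictly positive numbers), upgrades non-negativity of each $\overrightarrow{w_i}$ to strict positivity.

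\emph{Uniqueness up to scale.} Let $A_m:=B(k_{3m})B(k_{3m+1})B(k_{3m+2})$ with Birkhoff contraction coefficient $\tau_B(A_m)<1$ and projective diameter $\phi(A_m)<\infty$. Iterating the Birkhoff estimate $\phi(MA)\leq\tau_B(M)\phi(A)$ one obtains
\[
\phi(M_{3n})=\phi(A_0 A_1\cdots A_{n-1})\leq\Bigl(\prod_{m=0}^{n-2}\tau_B(A_m)\Bigr)\phi(A_{n-1}).
\]
A short asymptotic inspection of $B(a)B(b)B(c)$ shows that its log-entry matrix is additively separable in (row, column) to leading order, so the triple product is projectively rank-one in the limit $(a,b,c)\to\infty$ and both $\phi(B(a)B(b)B(c))$ and $\tau_B(B(a)B(b)B(c))$ tend to $0$. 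Combined with the strict inequality $\tau_B<1$ on the bounded region of $\mathbb{N}^3$, this gives uniform bounds $\tau^*:=\sup_m\tau_B(A_m)<1$ and $\phi^*:=\sup_m\phi(A_m)<\infty$. Hence $\phi(M_{3n})\leq(\tau^*)^{n-1}\phi^*\to 0$. Any two strictly positive solutions $(\overrightarrow{w_i})$ and $(\overrightarrow{w'_i})$ satisfy $\overrightarrow{w_0},\overrightarrow{w'_0}\in M_{3n}\bigl(\overline{(\mathbb{R}_+)^3}\bigr)$ for every~$n$, so $d_H(\overrightarrow{w_0},\overrightarrow{w'_0})\leq\phi(M_{3n})\to 0$, forcing $\overrightarrow{w_0}\propto\overrightarrow{w'_0}$; the recursion then propagates proportionality, with a single scalar, through the whole sequence.

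The main obstacle I anticipate is the uniqueness part: since the $k_i$ are not assumed bounded, the Birkhoff coefficient of a single triple $A_m$ could a priori be arbitrarily close to~$1$ and its projective diameter arbitrarily large. What saves the argument is the specific structure of $B(k)$---the log-additive asymptotics of the triple product show that large $k$'s actually \emph{enhance} rather than weaken the Birkhoff contraction---providing the uniform bound $\tau^*<1$ that makes the iterated contraction collapse the diameters to zero.
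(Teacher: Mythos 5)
The proposal follows a genuinely different route from the paper. The paper's proof factors the triple product as $B(k)B(l)B(m)=B'(k,l,m)B''$, where $B''=\bigl(\begin{smallmatrix}2&1&1\\1&1&0\\1&1&1\end{smallmatrix}\bigr)$ is a \emph{constant} matrix mapping $\overline{K'}$ strictly inside the auxiliary sub-cone $K'=\{w\in\mathbb R_+^3: w_3<w_1+w_2\}$, hence a uniform Hilbert-metric contraction there, while the $k$-dependent factor $B'(k,l,m)$ merely preserves $K'$, hence is a non-expansion. The scheme deliberately avoids having to estimate the Birkhoff coefficients of the $k$-dependent products. You instead try to bound those coefficients directly for the triple products $A_m=B(k_{3m})B(k_{3m+1})B(k_{3m+2})$ and then iterate. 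Your existence argument (normalize, backward-propagate, diagonalize, upgrade to strict positivity via the positive triple product) is fine.

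There is, however, a genuine gap in the uniqueness part. Your justification of $\tau^*:=\sup_m\tau_B(A_m)<1$ rests on the claim that $\phi\bigl(B(a)B(b)B(c)\bigr)\to 0$ as $(a,b,c)\to\infty$, which you would then combine with $\tau_B<1$ ``on the bounded region.'' But that limit statement only holds when all three of $a,b,c$ tend to infinity simultaneously. Take $b=c=1$ and $a\to\infty$: then
\[
B(a)B(1)B(1)=\begin{pmatrix}3a+1&3a&2a\\2&2&1\\1&1&1\end{pmatrix},
\]
and, e.g., the cross-ratio $\dfrac{A_{11}A_{33}}{A_{31}A_{13}}=\dfrac{3a+1}{2a}\to\dfrac32\ne 1$, so $\phi$ tends to a positive constant rather than to $0$. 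Since the regions where only one or two of the $k$'s are large are non-compact, the ``asymptotics $+$ compact region'' dichotomy you invoke does not establish the uniform bound $\sup\phi<\infty$, which is exactly what the iterated contraction needs. The conclusion $\sup_{a,b,c\in\mathbb N}\phi\bigl(B(a)B(b)B(c)\bigr)<\infty$ is in fact true and can be checked by hand — for every pair of rows $(i,j)$ of $B(a)B(b)B(c)$ the three ratios $A_{ik}/A_{jk}$ are uniformly comparable, say within a factor $2$, for all $a,b,c\in\mathbb N$ — but this must be verified explicitly across all six mixed asymptotic regimes, not inferred from the single regime $(a,b,c)\to\infty$. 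The paper's $B'B''$-decomposition is precisely the device that removes this burden: the strict contraction is concentrated in the constant factor $B''$, while the unbounded parameters only enter through a non-expansion, so no uniform Birkhoff estimate for the $k$-dependent matrices is ever needed.
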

\begin{proof}
Let $K=\mathbb R_+^3$ be the positive cone in the three-space
and let $K'=\{\overrightarrow w\in K\;;\;w_3<w_1+w_2\}$. For any
$k,l,m\in\mathbb N$ we have
$$B(k)B(l)B(m)=B'(k,l,m)B'',$$
where
$$B'(k,l,m)=\begin{pmatrix}k(l-1)+1&k(l(m-1)+m)&2k-1\\
l-1&l(m-1)+1&1\\0&m-1&1\end{pmatrix}$$
and
$$B''=\begin{pmatrix}2&1&1\\1&1&0\\1&1&1\end{pmatrix}$$
is a constant matrix.
One can verify that $B''(\overline K)\subset\overline{K'}$, $B''(\overline{K'})\subset K'$, $B'(k,l,m)(K')\subset K'$ for any
$k,l,m\in\mathbb N$. It follows that the linear map $B''$ restricted to $K'$ is a contraction
in the Hilbert projective metric (e.g., see \cite{McM} for the definition and basic properties), and the linear map defined by $B'(k,l,m)$ does not
expand in this metric for any $k,l,m\in\mathbb N$. Therefore, the intersection
$$\bigcap_{i=1}^\infty B(k_1)\ldots B(k_{3i})(K)$$
is a single open ray in $K'$. The claim follows.

\begin{remark}
In \cite{ds14} a flaw occurs in the proof of Lemma~14, where a similar argument is used. The decomposition of
$B(m,n)$ that is given there does not work as proposed. One should use the following decomposition instead:
$$B(m_1,n_1)B(m_2,n_2)=B'(m_1,n_1,m_2,n_2)B'',$$
where
$$B''=\begin{pmatrix}1&1&1&1&1\\1&3&3&1&1\\3&3&1&1&1\\2&4&5&2&2\\1&2&3&2&1\end{pmatrix}.$$
The matrix $B''$ has only positive entries, so it defines a contraction of the positive cone $(\mathbb R_+)^5$ with respect to the
Hilbert projective metric.
It is a direct check that the matrix $B'(m_1,n_1,m_2,n_2)$ has only non-negative entries, so, the
corresponding linear map does not expand the Hilbert metric.
\end{remark}
\end{proof}

Let $\overrightarrow{\ell_0}=(1,1,1,1)$, and let $\overrightarrow{w_0}$ be as in Lemma~\ref{w-exist}.
Define recursively
\begin{equation}\label{l-recursion}
\overrightarrow{\ell_i}{}_{\kern-0.1em+1}=\overrightarrow{\ell_i}\cdot A(k_i).
\end{equation}

\begin{proposition}\label{prop-two-end}
For any sequence $k_0,k_1,k_2,\ldots$ of natural numbers the band complex
$Z(\overrightarrow{w_0},\overrightarrow{\ell_0})$ defined above is annulus free and of thin type.

If, in addition, for all $i\geqslant0$, we have $k_{i+1}\geqslant2k_i$, then
the union of leaves in $Z(\overrightarrow{w_0},\overrightarrow{\ell_0})$ that are not two-ended trees has
zero measure.
\end{proposition}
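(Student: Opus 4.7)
The plan is to derive both assertions from the Rips-machine sequence obtained by iterating Lemma~\ref{lem-Rips-step}. Set $Z_i=Z(\overrightarrow{w_i},\overrightarrow{\ell_i})$ for $i\geqslant 0$. Since $\overrightarrow{w_i}=B(k_i)\overrightarrow{w_{i+1}}$ and $\overrightarrow{\ell_{i+1}}=\overrightarrow{\ell_i}A(k_i)$ by construction, Lemma~\ref{lem-Rips-step} furnishes, for each $i$, a finite sequence of collapses from free arcs taking $Z_i$ to a complex isomorphic to~$Z_{i+1}$. Concatenating these sequences produces the infinite Rips sequence required by condition~(2) of Definition~\ref{thindef}.

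For annulus-freeness and density of every leaf (condition~(1) of Definition~\ref{thindef}), I invoke the contraction estimate already used in the proof of Lemma~\ref{w-exist}: every product $B(k_i)B(k_{i+1})B(k_{i+2})$ strictly contracts the Hilbert metric on $K'$. This pins the ray spanned by $\overrightarrow{w_0}$ down to a single projectively irrational point, and rules out any rational linear dependence among $w_{01},w_{02},w_{03}$ that is compatible with the Rips dynamics. Consequently no nontrivial cycle in a leaf can have vanishing $dx$-integral, which yields annulus-freeness; at the same time, the coordinates of $\overrightarrow{w_i}$, when expressed as fractions of $|D|$, contract to~$0$ as $i\to\infty$, and the existence of the Rips sequence then forces every leaf to be dense in~$Z_0$.

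For the second assertion I follow the line of argument of \cite{ds14}. Proposition~\ref{thin-criteria} combined with the ends result of Bestvina--Feighn and Gaboriau cited in the introduction says that almost every leaf of $\mathcal{F}_{Z_0}$ is a tree with either one or two topological ends; it therefore suffices to bound the measure of the one-ended leaves. At each stage $i$ I isolate a ``bad'' subset $E_i\subset Z_0$ consisting of points whose leaves, traced through the first $i$ Rips collapses, fail to visit both halves of the transversal of $Z_i$. By inspection of the matrices $A(k)$ and~$B(k)$, the set $E_i$ can be written as a union of vertical strips in the bands of $Z_i$ whose total measure (pulled back to $Z_0$) is $O(1/k_i)$. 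Under the hypothesis $k_{i+1}\geqslant 2k_i$ one has $k_i\geqslant 2^i k_0$, whence $\sum_i \mu_{Z_0}(E_i)<\infty$; the Borel--Cantelli lemma then says almost every point lies in only finitely many $E_i$, and a leaf through such a point crosses both halves of the transversal of $Z_i$ for all sufficiently large~$i$ and therefore has exactly two topological ends.

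The main obstacle is the third step: pinning down the correct geometric notion of ``visiting both halves'' so that $E_i$ becomes an explicit union of strips whose widths are readable off the entries of $A(k_i)$ and $B(k_i)$, and then verifying the bound $\mu_{Z_0}(E_i)=O(1/k_i)$. The growth hypothesis $k_{i+1}\geqslant 2k_i$ is tuned precisely to make the resulting series summable and the Borel--Cantelli conclusion available.
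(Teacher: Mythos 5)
Your overall scheme has the right flavor in places, but several of the steps you sketch do not hold up, and the key computation the paper relies on is absent.

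For annulus-freeness your argument is logically off: a cycle lying inside a leaf has vanishing $dx$-integral \emph{by definition} of a leaf, so the statement ``no nontrivial cycle in a leaf can have vanishing $dx$-integral'' is vacuous rather than a useful criterion, and the projective irrationality of $\overrightarrow{w_0}$ does not, on its own, rule out loops in leaves (the rank of $Z_0$ is $3$ while $H_1(Z_0,\sing)$ is larger, so cycles with zero $dx$-period certainly exist; the issue is whether any of them can lie entirely in a leaf). The paper's argument is qualitatively different and much more elementary: all entries of $\overrightarrow{\ell_i}$ grow without bound under \eqref{l-recursion}, whereas the leafwise length of any loop contained in a leaf is invariant under the Rips machine; since a loop must traverse a whole band, this is impossible. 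For density the paper does not rely on widths ``contracting to $0$'' (which isn't what happens with the normalization in play); it invokes Imanishi's theorem: a non-dense leaf would force an arc joining two singularities through the regular part, and such arcs can only shorten under collapse, again incompatible with band lengths going to infinity.

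For the two-ended assertion, your Borel--Cantelli scheme gestures at the right structure, but the central objects are undefined: you never specify what ``the transversal of $Z_i$'' or ``visiting both halves'' means, never relate this to the topological ends of a leaf, and never derive the asserted bound $\mu_{Z_0}(E_i)=O(1/k_i)$. The paper's version of this step is concrete: it tracks the total area $S_i=\overrightarrow{\ell_i}C\overrightarrow{w_i}$ of $Z_i\subset Z_0$, proves the matrix inequality which gives $S_{i+1}>(1-2/k_i)S_i$, and then concludes $\lim_i S_i=\mu_{Z_0}(\cap_iZ_i)>0$ because $\sum 2/k_i<\infty$. It then cites \cite{bf95} for the fact that $\cap_i Z_i$ misses all one-ended leaves, so the set of two-ended leaves has positive measure. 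Finally, and this is a step your proposal omits entirely, the paper uses the uniqueness statement in Lemma~\ref{w-exist} to deduce that any measurable union of leaves has zero or full measure, upgrading ``positive measure'' to ``full measure''. Without either a precise definition of $E_i$ plus a proof of the $O(1/k_i)$ bound, or the area computation plus the unique-ergodicity promotion, the second assertion is not established.
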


\begin{proof}
First, we show that $Z(\overrightarrow{w_0},\overrightarrow{\ell_0})$ is annulus free.
One can see from~\eqref{ABmatrices} and \eqref{l-recursion} that all entries of $\overrightarrow{\ell_i}$
grow without bound with $i\rightarrow\infty$. On the other hand, the length of any loop
contained in a leaf of $\mathcal F_{Z(\overrightarrow{w_0},\overrightarrow{\ell_0})}$
is preserved by the Rips machine and should remain fixed. Therefore, all leaves of
$\mathcal F_{Z(\overrightarrow{w_0},\overrightarrow{\ell_0})}$ are simply connected.

Now verify that $Z(\overrightarrow{w_0},\overrightarrow{\ell_0})$ is of thin type. The condition (2)
of Definition~\ref{thindef} is satisfied by Lemma~\ref{lem-Rips-step} and by construction of $\overrightarrow w_0$, so we need only to check
that any leaf of $\mathcal F_{Z(\overrightarrow{w_0},\overrightarrow{\ell_0})}$ is everywhere dense.
By Imanishi's theorem (see \cite{Im} and \cite{GLP94}) the converse would imply the existense of an arc connecting two singularities
of $\mathcal F_Y$ through the regular part of a singular leaf. Such an arc can get only shorter
under a collapse from a free arc, which is inconsistent with the infinite grow of all band lengths.

Now we prove the last claim of the Proposition. Denote for short:
$$A_i=A(k_i),\ B_i=B(k_i),\ Z_i=Z(\overrightarrow{w_i},\overrightarrow{\ell_i}).$$
It follows from Lemma~\ref{lem-Rips-step} that $Z_{i+1}$ can be identified with an enhanced band complex $Z_i$
obtained from~$Z_i$ by a few collapses from a free arc.
So, we think of $Z_{i+1}$ as a subset of $Z_i$ and, hence, of $Z_0$.

Denote by $S_k$ the total area of $Z_i$:
$$S_i=\overrightarrow{\ell_i}\cdot C\cdot \overrightarrow{w_i},$$
where
$$C=\begin{pmatrix}1&0&0\\0&1&0\\0&0&1\\1&0&0\end{pmatrix}.$$

We claim that under the assumptions of the Proposition we have
\begin{equation}\label{limit}
\lim_{i\rightarrow\infty}S_i>0.
\end{equation}

Indeed, it can be checked directly that the matrix
$$\left(A_iC-\Bigl(1-\frac{2}{k_i}\Bigr)CB_i\right)B_{i+1}$$ has only positive entries
for all $i\geqslant0$
since they can be expressed as polynomials in $k_i$ and $(k_{i+1}-2k_i)$ with positive coefficients.
Therefore,
$$S_{i+1}-\Bigl(1-\frac2{k_i}\Bigr)S_i=\overrightarrow{\ell_i}\left(A_iC-\Bigl(1-\frac2{k_i}\Bigr)CB_i\right)
B_{i+1}\overrightarrow w\kern-0.15em{}_{i+2}>0,$$
which can be rewritten as
$$S_{i+1}>\Bigl(1-\frac2{k_i}\Bigr)S_i.$$
Since $k_i$ grows exponentially fast with $i$, we have
$\displaystyle\sum_{i=0}^\infty\frac2{k_i}<\infty$,
which implies~\eqref{limit}.

By definition of a collapse from a free arc the measure $\mu_{Z_{i+1}}$ (see Definition \ref{thindef}) coincides with
the restriction of $\mu_{Z_i}$, and hence of $\mu_{Z_0}$, to $Z_{i+1}$. So, $\displaystyle\lim_{i\rightarrow\infty}S_i$ equals
$\mu_{Z_0}\Bigl(\cap_iZ_i\Bigr)$. By general theory of band complexes (see \cite{bf95}) the subset
$\cap_iZ_i\subset Z_0$ has an empty intersection with one-ended leaves of $\mathcal F_{Z_0}$.
Therefore, the union of two-ended leaves of $\mathcal F_{Z_0}$ has positive measure. Lemma \ref{w-exist}
implies ``a unique ergodicity'' for $\mathcal F_{Z_0}$, namely, that any measurable union of leaves
of $\mathcal F_{Z_0}$ has either zero or full measure. We conclude that
the union of two-ended leaves has full measure.\end{proof}

\begin{proof}[Proof of Theorem \ref{comp}]
Let $Z(\overrightarrow w)$ be a band complex with support interval $D=[0,w_1+w_2+w_3]$ and
three bands $\mathcal B_1$, $\mathcal B_2$, $\mathcal B_3$ whose bases a glued to the following subintervals of $D$:
$$\begin{aligned}
\mathcal B_1&:\text{ to }[0,w_1]\text{ and }[w_2+w_3,w_1+w_2+w_3],\\
\mathcal B_2&:\text{ to }[0,w_1]\text{ and }[w_1+w_3,w_1+w_2+w_3],\\
\mathcal B_3&:\text{ to }[0,w_1]\text{ and }[w_1+w_2,w_1+w_2+w_3].
\end{aligned}$$
So, the band complex $Z(\overrightarrow w)$ can be obtained from the enhanced band
complex $Z(\overrightarrow w,\overrightarrow\ell)$ by collapsing the band $\mathcal B_4$ and forgetting the lengths of the bands.
More precisely, there is a continuous map
$\psi:Z(\overrightarrow w,\overrightarrow\ell)\rightarrow
Z(\overrightarrow w)$ that preserves the $1$-form $dx$ and takes the bands $\mathcal B_1$, $\mathcal B_2$, $\mathcal B_3$
of $Z(\overrightarrow w,\overrightarrow\ell)$ to the respective bands of $Z(\overrightarrow w)$
and takes $\mathcal B_4$ to a subinterval of $D$. Clearly the map $\psi$ takes leaves to leaves and
preserve the quasi-isometry and homotopy class of each leaf. It is also clear that $Z(\overrightarrow w)$ is symmetric
with respect to the involution that flips the support interval $D$.

It follows from Proposition \ref{prop-two-end} that there are uncountably many choices of parameters $\overrightarrow w$
for which almost all leaves of $Z(\overrightarrow w)$ are two-ended trees.
\end{proof}

\section{Plane sections of the regular skew polyhedron $\{4,6\,|\,4\}$}

We recall briefly the formulation of Novikov's problem on plane sections of 3-periodic surfaces.
Let $M$ be a closed null-homologous surface in the 3-torus $\mathbb T^3=\mathbb R^3/L$,
where $L\cong\mathbb Z^3$ is a lattice, and let $H=(H_1,H_2,H_3)\in\mathbb R^3$ be a non-zero
vector. We denote by $p$ the projection $\mathbb R^3\rightarrow\mathbb T^3$, and by $\widehat M\subset\mathbb R^3$
the $\mathbb Z^3$-covering $p^{-1}(M)$ of $M$. We also fix a smooth function $f:\mathbb T^3\rightarrow\mathbb R$
of which $M$ is a level surface, $M=\{x\in\mathbb T^3\;;\;f(x)=c\}$.

Non-singular connected components of the intersection of $\widehat M$ with a plane of the form
\begin{equation}\label{plane}\Pi_a=\{x\in\mathbb R^3\;;\;\langle H,x\rangle=a\},\end{equation}
where $\langle\;,\,\rangle$ stands for the Euclidean scalar product,
are trajectories of the following ODE:
\begin{equation}\label{ode}
\dot x=\nabla\widehat f(x)\times H,\end{equation}
where $\widehat f=f\circ p$.
Their image in $\mathbb T^3$ under $p$ are leaves of the foliation $\mathcal F_M$ on $M$ defined by the kernel of the closed 1-form
\begin{equation}\label{eta}\eta=(H_1\,dx_1+H_2\,dx_2+H_3\,dx_3)|_M.
\end{equation}

Novikov's question was about the existence of an asymptotic direction of open trajectories defined by~\eqref{ode}.
As shown in \cite{dyn-93} the foliation $\mathcal F_M$ typically does not have minimal components of genus larger than one.
For open trajectories this implies that they are typically either not present (in which case we call the pair $(M,H)$ \emph{trivial})
or have \emph{a strong asymptotic direction} (then the pair $(M,H)$ is called \emph{integrable}),
which means that, for a certain parametrization (not related to the one prescribed by~\eqref{ode}), they have the form
\begin{equation}\label{strong}x(s)=sv+O(1),\end{equation}
where $v\in\mathbb R^3$ is a constant vector. There is also a special case discovered by S.\,Tsarev (see \cite{dyn97}) when
minimal components of $\mathcal F_M$ have genus one but the trajectories have an asymptotic direction
only in the usual, not the strong, sense, i.e. with $o(s)$ instead of $O(1)$ in~\eqref{strong}.
In Tsarev's case, the vector $H$ is not ``maximally irrational'',
i.e.\ $\dim_{\mathbb Q}\langle H_1,H_2,H_3\rangle=2$.

It is, however, possible that $\mathcal F_M$ has a minimal component of genus $>1$ (as shown in \cite{dyn97}
the genus cannot be equal to $2$, so `$>1$' actually means `$\geqslant3$' here),
see \cite{dyn97}. In this case, the pair $(M,H)$ is called \emph{chaotic} since
there is a~priori no reason for open trajectories to have an asymptotic direction. 
If the system is chaotic and uniquely ergodic, then, as A.Zorich notes in~\cite{zor92},
trajectories, indeed, cannot have an asymptotic direction.
Particular chaotic examples \cite{dyn97,dyn08,S1} are known in which
almost all planes of the form~\eqref{plane} intersect $\widehat M$ in a single open trajectory, which, in a sense, wanders around the whole plane \cite{s13}.

Chaotic pairs $(M,H)$ can be characterized in terms of any of the foliations $\mathcal F_{N_-}$,
$\mathcal F_{N_+}$ induced by the 1-form
$\omega=H_1\,dx_1+H_2\,dx_2+H_3\,dx_3$ on the submanifolds
$N_-=\{x\in\mathbb T^3\;;\;f(x)\leqslant c\}$, $N_+=\{x\in\mathbb T^3\;;\;f(x)\geqslant c\}$,
of which $M$ if the boundary.
Namely, the following can be extracted from \cite{dyn97}:

\begin{proposition}\label{chaotic-criteria}
A pair $(M,H)$ is chaotic if and only if
$\mathcal F_{N_-}$ (or, equivalently, on $\mathcal F_{N_+}$) has uncountably many leaves that
are not quasi-isometric (in the induced intrinsic
metric) to a point, to a straight line, or to a plane.
\end{proposition}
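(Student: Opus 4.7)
The plan is to translate the chaoticity condition on $(M,H)$, which concerns the topological type of minimal components of the 2-dimensional foliation $\mathcal F_M$, into the statement about quasi-isometry types of leaves of the 2-dimensional foliations $\mathcal F_{N_\pm}$ on the 3-manifolds-with-boundary $N_\pm$. The bridge is the observation that each minimal component of $\mathcal F_M$ of genus $g$ bounds a piece of $N_\pm$, and the restriction of $\mathcal F_{N_\pm}$ to this piece carries (after a finite cut along compact separating leaves) the structure of a band complex in the sense of the present paper.

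First I would decompose $\mathcal F_M$ into periodic (annular) components, components on which $\omega|_M$ is exact, and minimal components of genus $g\geqslant 1$, using the standard structure theory for foliations given by closed 1-forms with Morse singularities. Correspondingly, $N_\pm$ splits along a finite collection of compact leaves into pieces, each associated with one of the three types. I would then analyze the quasi-isometry type of leaves in each piece: pieces coming from exact (trivial) components give compact leaves of $\mathcal F_{N_\pm}$, hence leaves quasi-isometric to a point; pieces coming from periodic components give strips or annuli, quasi-isometric to a point or to a straight line; pieces coming from a genus-one minimal component can be identified with a mapping torus over a circle foliated irrationally, and a direct unfolding into the universal cover shows that each nonsingular leaf is quasi-isometric to a plane. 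Together these cover precisely the non-chaotic possibilities.

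For the chaotic case, pick a minimal component of $\mathcal F_M$ of genus $g\geqslant 3$ and consider the corresponding piece $P\subset N_\pm$. Following \cite{dyn97,dyn08}, I would cut $P$ along a finite family of compact transverse arcs to realize the foliation $\mathcal F_{N_\pm}|_P$ as an annulus-free band complex in the sense of the present paper; the cutting exists precisely because $\omega|_M$ restricted to the chosen minimal component defines a measured foliation on a higher-genus surface, whose Rips--Levitt decomposition produces such a band structure. Having made this identification, I can apply Proposition~\ref{thin-criteria} (or its analog for more than three bands, which also follows from Rips theory \cite{bf95}): the band complex is of thin type exactly when there are uncountably many leaves that fail to be quasi-isometric to a point, a line, or a plane. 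Combined with the previous paragraph this gives both implications: chaoticity forces a thin-type piece and hence uncountably many such leaves, while absence of chaoticity means all pieces are of the three benign types and no such leaves can occur.

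The main obstacle is the genus-one (Tsarev) case. Here $\mathcal F_M$ has infinitely many non-compact leaves and yet the corresponding pieces of $\mathcal F_{N_\pm}$ should have leaves quasi-isometric to $\mathbb R^2$; one must check carefully that the ``fattening'' of a dense line on a 2-torus by an interval of transverse directions yields a leaf whose intrinsic metric is biLipschitz to a flat plane, rather than a more exotic surface. A secondary difficulty is bookkeeping at the interfaces between pieces: a single leaf of $\mathcal F_{N_\pm}$ may traverse several pieces via separating compact leaves, so one must verify that gluing finitely many model leaves along compact pieces does not change the quasi-isometry class from the one dictated by the ``worst'' piece it meets. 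Both points are technical rather than conceptual and essentially follow the arguments of \cite{dyn97}.
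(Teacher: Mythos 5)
The paper does not actually prove Proposition~\ref{chaotic-criteria}: it states that the result ``can be extracted from \cite{dyn97}.'' So there is no in-paper proof to compare your route against, and I will assess your sketch on its own terms.

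Your decomposition plan (split $\mathcal F_M$ into trivial, periodic, genus-one, and genus-$\geqslant3$ pieces and track the quasi-isometry types of the corresponding leaves of $\mathcal F_{N_\pm}$) is the right overall framework. But there is a conceptual muddle in the genus-$\geqslant3$ step that leaves a genuine gap. The band complex relevant to this proposition is a $2$-complex $Z$ embedded in the \emph{three-dimensional} region $N_\pm$, whose one-dimensional leaves are quasi-isometric to the two-dimensional leaves of $\mathcal F_{N_\pm}$ (this is exactly what the paper says immediately after the proposition and what Proposition~\ref{thin-chaotic} realises concretely). This is \emph{not} the band complex arising from a Rips--Levitt decomposition of the surface measured foliation $\mathcal F_M|_{M_0}$: that object is always of \emph{surface type}, with all leaves quasi-isometric to lines, and would therefore give the wrong answer in every chaotic case. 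The bands of the correct $Z$ record how trajectories of $\mathcal F_{M_0}$ get merged when ``fattened'' through the solid region $N_\pm$; it is this merging that produces branching and makes thin type possible. Your phrase ``whose Rips--Levitt decomposition produces such a band structure'' conflates the two constructions, and once that conflation is removed, the step ``chaoticity forces a thin-type piece'' is an assertion, not an argument. Proposition~\ref{thin-criteria} only gives you ``thin $\Leftrightarrow$ uncountably many non-benign leaves''; it does not tell you that a genus-$\geqslant3$ minimal component of $\mathcal F_M$ forces $Z$ to be thin rather than surface or simplicial type. Ruling that out requires using the specific constraints of Novikov's problem ($M$ null-homologous in $\mathbb T^3$, $\eta$ the restriction of a $1$-form from $\mathbb T^3$), and that is precisely the content of \cite{dyn97}---the heart of the proposition. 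The Tsarev genus-one analysis and the cross-piece gluing bookkeeping that you flag are additional unresolved items, though those seem more routine once the main gap is filled.
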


Since only quasi-isometry class of the leaves matters, one can replace $N_-$ by a foliated
$2$-complex $Z$ embedded in $N_-$ so that every leaf of $Z$ embeds in a leaf of $N_-$
quasi-isometrically. In the genus $3$ case, such a $2$-complex can be chosen among
band complexes made of $3$ bands.

This is how band complexes are related to Novikov's problem in general. Below we demonstrate
this relation explicitly in very detail for a single surface, which was also the main subject
of \cite{dd}, where the set of all $H$'s giving rise to the chaotic case was described.
It appeared to be a fractal set discovered earlier by G.Levitt (\cite{l93}) in connection with pseudogroups of rotations and arose also in
symbolic dynamics (see \cite{AS}). It is shown by A.\,Avila, A.\,Skripchenko, and P.\,Hubert
in \cite{AHS} that the Hausdorff dimension of this set is strictly less than two.

Our $3$-periodic surface $\widehat M$ is going to be the one consisting of all squares of the form
$$\begin{aligned}\{i\}\times[j,j+1]\times[k,k+1],\\
[j,j+1]\times\{i\}\times[k,k+1],\\
 [j,j+1]\times[k,k+1]\times\{i\}\phantom,
 \end{aligned}$$
with $i,\,j,\,k\in\mathbb Z,\ j+k\equiv1(\mathrm{mod}\,2)$.

The fundamental domain of $M$ is shown in Fig.~\ref{surface-fundam}.
\begin{figure}[ht]
\centerline{\includegraphics[scale=0.6]{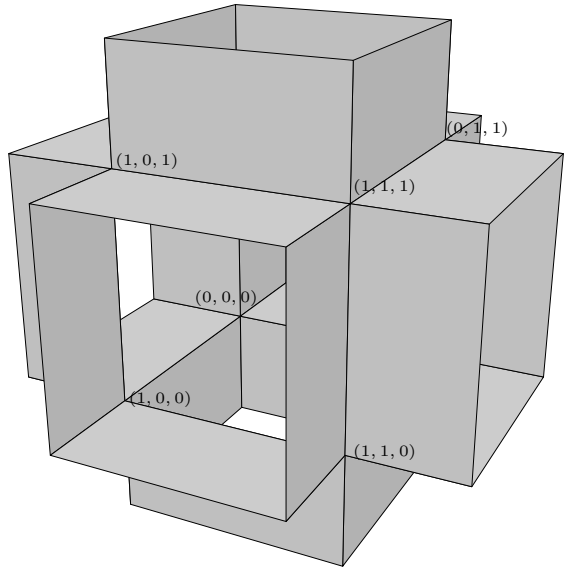}\put(-155,108){\tiny $(0,0,0)$}\put(-180,70){\tiny $(1,0,0)$}%
\put(-95,50){\tiny $(1,1,0)$}\put(-95,150){\tiny $(1,1,1)$}\put(-185,160){\tiny $(1,0,1)$}\put(-60,172){\tiny $(0,1,1)$}}
\caption{A fundamental domain of $\widehat M$}\label{surface-fundam}
\end{figure}
The lattice $L$ is set to $2\mathbb Z^3$. One readily checks that $M=\widehat M/L$
has genus $3$. The surface $\widehat M$ is known in the literature as the regular skew polyhedron $\{4,6\,|\,4\}$,
see~\cite{cox}.

The reader may protest here since the surface $M$ is PL but not smooth. However, for any fixed $H$,
on can smooth it out so as too keep the topology of the foliation $\mathcal F_M$ unchanged. In order
to do so it suffices to $C^0$-approximate $M$ so as to keep the positions of the two monkey saddle singularities
of $\mathcal F_M$ fixed (if $H_1,H_2,H_3>0$, they occur at points $(0,0,0)$  and $(1,1,1)\,(\mathrm{mod}\,L)$)
and to avoid introducing new singularities.

\begin{remark}
Our settings here are in a sense opposite to those of \cite{dyn08}, where the vector $H$
is fixed and the lattice $L$ and the surface $M$ are being varied.
\end{remark}

\begin{proposition}\label{thin-chaotic}
The band complex $Z(\overrightarrow w)$ introduced in the proof of Theorem \ref{comp}
is of thin type if and only if the pair $(M, H)$ is chaotic,
where 
\begin{equation}\label{Hw}2H=(w_2+w_3,w_1+w_3,w_1+w_2).
\end{equation}
\end{proposition}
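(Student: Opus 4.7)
The plan is to construct an explicit foliated 2-complex $Z^{*}\subset N_{-}$ that is isomorphic to $Z(\overrightarrow w)$ as a band complex, with widths dictated by $H$ via~\eqref{Hw}, and such that the inclusion $Z^{*}\hookrightarrow N_{-}$ is a leafwise quasi-isometric equivalence. Once such a spine is produced, the proposition follows by combining the two existing criteria: Proposition~\ref{chaotic-criteria} identifies chaoticity of $(M,H)$ with the existence of uncountably many leaves of $\mathcal F_{N_{-}}$ not quasi-isometric to a point, a line, or a plane, while Proposition~\ref{thin-criteria} identifies thinness of the three-band complex $Z(\overrightarrow w)$ with the analogous statement for $\mathcal F_{Z(\overrightarrow w)}$; since leafwise quasi-isometry preserves all three exceptional classes, the two conditions coincide.

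\emph{Geometry near the vertices.} First I would describe $\widehat M$ combinatorially: its vertices fall into two $L$-orbits represented by $(0,0,0)$ and $(1,1,1)$, and $\mathcal F_{M}$ has exactly two monkey saddle singularities at the images of these points in $\mathbb T^{3}$ (when $H_{1},H_{2},H_{3}>0$). Away from these two vertex types, $N_{-}$ locally consists of collars of the faces of $M$, naturally organised into three families indexed by the three coordinate directions; the two vertex neighbourhoods are the ``saddle regions'' where the three families meet.

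\emph{Construction of the spine and identification of the widths.} In each of the three coordinate-direction families I would pick a single embedded rectangle $\mathcal B_{i}^{*}$ transverse to the corresponding coordinate axis, on which $\omega$ restricts to a non-vanishing transverse 1-form; its width $w_{i}$ is the total $\omega$-length across it. The three rectangles are arranged so that their bases all lie on a common support interval $D$ passing through the two saddle regions. A direct inspection of how the collars glue at the two monkey saddles reproduces exactly the attaching pattern of $Z(\overrightarrow w)$, and the involution swapping the two saddle regions realises the required central symmetry~$\tau$. The widths are then fixed by the periods of $\omega$: a meridional cycle around the $x_{i}$-tube has $\omega$-integral $2H_{i}$ (it translates by the lattice vector $2e_{i}$), and when pushed into $Z^{*}$ it traverses precisely one base of $\mathcal B_{j}^{*}$ and one base of $\mathcal B_{k}^{*}$, for $\{i,j,k\}=\{1,2,3\}$, giving
$$2H_{i}=w_{j}+w_{k},$$
which is exactly~\eqref{Hw}. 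Inversely $w_{i}=-H_{i}+H_{j}+H_{k}$, positive precisely when $H$ satisfies the strict triangle inequality, which is the natural positivity region for the three-band parametrisation.

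\emph{Main obstacle.} The delicate step is the construction inside the two saddle regions: one must embed the bases of the three bands into a single common arc in exactly the prescribed combinatorial order, and then verify that the leafwise retraction $N_{-}\to Z^{*}$ (contracting each tube onto its representative band and each saddle region onto its portion of $D$) induces a quasi-isometry of leaves with bounded distortion. This requires a careful choice of collar neighbourhoods around the monkey saddles and an explicit analysis of the local picture there; once these combinatorial and metric matchings are verified, the remainder of the proof is the routine two-step translation through Propositions~\ref{chaotic-criteria} and~\ref{thin-criteria}.
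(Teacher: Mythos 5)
Your strategy is the same as the paper's: build a spine of $N_-$ isomorphic to $Z(\overrightarrow w)$, show the inclusion into $N_-$ is a leafwise quasi-isometry, and then translate between Propositions~\ref{thin-criteria} and \ref{chaotic-criteria}. Your period computation ($2H_i = w_j + w_k$ from the cycle translating by $2e_i$ crossing one base of each of the other two bands) is a clean way to recover \eqref{Hw}; in the paper this relation appears instead as the condition making the non-diagonal sides of the strips $\mathcal S_i(n)$ orthogonal to $H$, so the two derivations are consistent.

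However, the step you defer as the ``main obstacle'' is precisely the substantive content of the proof, and you should not expect to get away with a retraction-plus-collars argument in the vague form you propose: near the two monkey saddles the foliation of $N_-$ is genuinely two-dimensional and the separatrix structure has to match exactly the free-arc pattern of $Z(\overrightarrow w)$, which is a nontrivial combinatorial statement, not something automatic from ``picking a rectangle in each tube.'' The paper handles this by making everything completely explicit in $\mathbb R^3$: $\widehat Z$ is built from the diagonal segments $D(n)$ (joining $2n$ to $2n+(1,1,1)$, these become the support multi-interval and pass through both saddle types) together with three families of planar parallelogram strips $\mathcal S_i(n)$ running between $D(n)$ and $D(n+e_i)$, with $\omega$-width $w_i$; simultaneously $\widehat N_-$ is decomposed into the four families of unit cubes $\cube_0(n),\cube_1(n),\cube_2(n),\cube_3(n)$. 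A plane section $\Pi_a\cap\widehat N_-$ then falls apart into ``islands'' (sections of $\cube_0$-cubes), ``capes'' and ``bridges'' (sections of $\cube_i$-cubes, $i=1,2,3$, with one or two boundary arcs on islands), all of uniformly bounded diameter, and the nerve of this decomposition is literally the graph $\Gamma_a=\Pi_a\cap\widehat Z$. That uniform boundedness is what gives the quasi-isometry with no further metric estimates. So your outline is right, but you would need to replace the appeal to unspecified ``collar neighbourhoods'' and ``retraction'' by an explicit cell structure of this kind before the quasi-isometry claim is justified.
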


Note that due to the cubic symmetry of the surface $M$ the pair $(M,H)$ is chaotic if and only if so is $\bigl(M,(|H_1|,|H_2|,|H_3|)\bigr)$.
If all $H_i$'s are positive but don't have the form~\eqref{Hw} with positive $w_i$'s, i.e.\ don't satisfy the triangle
inequalities, then the pair $(M,H)$ is integrable (see \cite{dd}).

\begin{proof}
For $n\in\mathbb Z^3$ we denote:\smallskip\\
by $D(n)$ the straight line segment connecting $n$ with $n+(1,1,1)$;\smallskip\\
by $e_1$, $e_2$, $e_3$ the standard basis of $\mathbb Z^3$;\smallskip\\
by $\mathcal S_i(n)$, $i=1,2,3$ the parallelogram with vertices
$$2n+\Bigl(1-\frac{w_i}{w_1+w_2+w_3}\Bigr)(1,1,1),\quad2n+(1,1,1)\quad2n+2e_i+\frac{w_i}{w_1+w_2+w_3}(1,1,1),\quad2n+2e_i$$
\begin{figure}[ht]
\centerline{\includegraphics[scale=0.5]{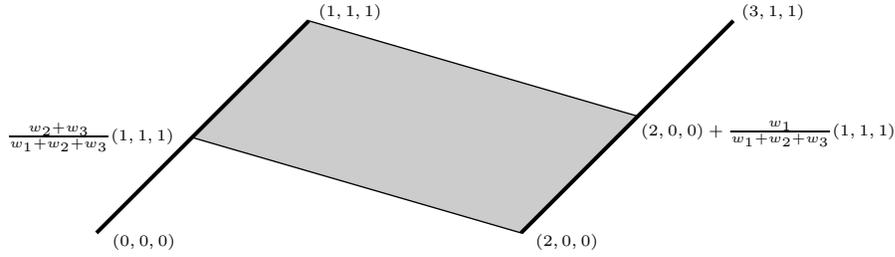}\put(-240,0){\tiny$(0,0,0)$}\put(-162,87){\tiny$(1,1,1)$}%
\put(-80,0){\tiny$(2,0,0)$}\put(-2,87){\tiny$(3,1,1)$}\put(-280,40){\tiny$\frac{w_2+w_3}{w_1+w_2+w_3}(1,1,1)$}%
\put(-40,42){\tiny$(2,0,0)+\frac{w_1}{w_1+w_2+w_3}(1,1,1)$}}
\caption{The strip $S_1(0,0,0)$}\label{astrip}
\end{figure}
see Fig.~\ref{astrip};\smallskip\\
by $\widehat Z$ the union
$$\bigcup_{n\in\mathbb Z^3}\bigl(D(n)\cup\mathcal S_1(n)\cup\mathcal S_2(n)\cup\mathcal S_3(n)\bigr)$$
\begin{figure}[ht]
\centerline{\includegraphics[scale=0.7]{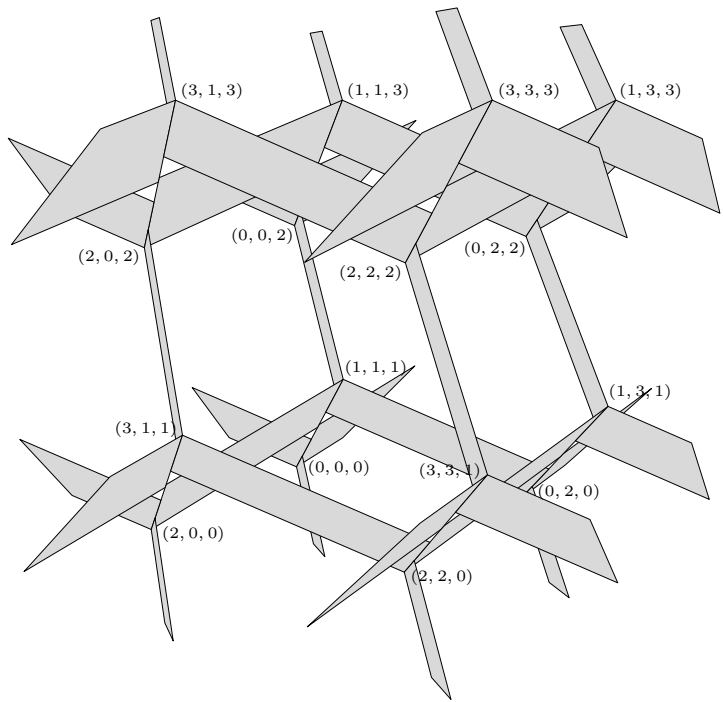}%
\put(-165,85){\tiny$(0,0,0)$}%
\put(-151,123){\tiny$(1,1,1)$}%
\put(-220,60){\tiny$(2,0,0)$}%
\put(-238,100){\tiny$(3,1,1)$}%
\put(-252,165){\tiny$(2,0,2)$}%
\put(-126,44){\tiny$(2,2,0)$}%
\put(-213,228){\tiny$(3,1,3)$}%
\put(-150,228){\tiny$(1,1,3)$}%
\put(-93,228){\tiny$(3,3,3)$}%
\put(-47,228){\tiny$(1,3,3)$}%
\put(-194,173){\tiny$(0,0,2)$}%
\put(-153,159){\tiny$(2,2,2)$}%
\put(-106,168){\tiny$(0,2,2)$}%
\put(-51,114){\tiny$(1,3,1)$}%
\put(-123,84){\tiny$(3,3,1)$}%
\put(-78,76){\tiny$(0,2,0)$}}
\caption{The $2$-complex $\widehat Z$}\label{zhat}
\end{figure}%
\begin{figure}[ht]
\centerline{\includegraphics[scale=0.7]{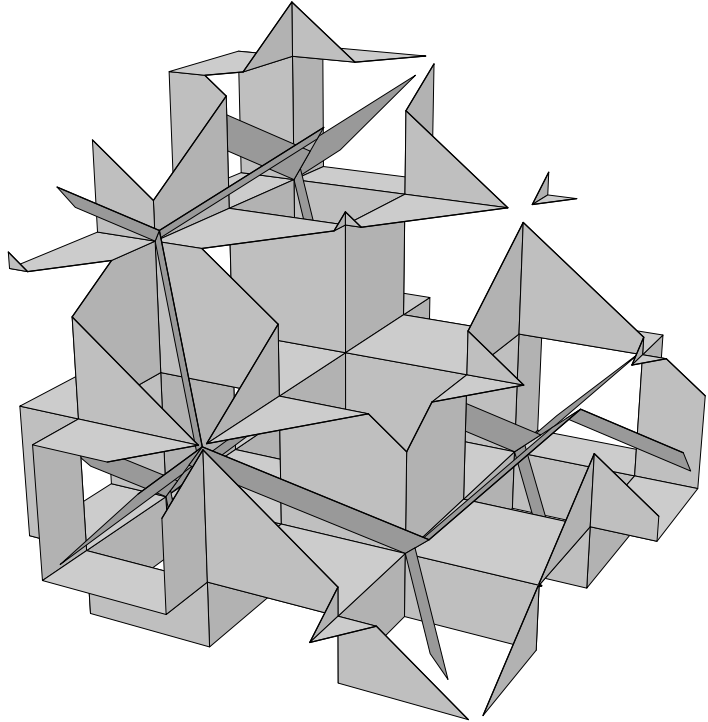}}
\caption{The surface $\widehat M$ and the $2$-complex $\widehat Z$ cut by a plane $\Pi_a$}\label{both}
\end{figure}%
see Figg.~\ref{zhat}, \ref{both};\smallskip\\
by $Z$ the projection $p(\widehat Z)\subset\mathbb T^3$;\smallskip\\
by $\cube_0$ the unit cube $[0,1]\times[0,1]\times[0,1]$;\smallskip\\
by $\cube_0(n)$ the cube $\text{\mancube}_0+2n$;\smallskip\\
by $\cube_i(n)$, $i=1,2,3$, the cube $\text{\mancube}_0+e_i+2n$;\smallskip\\
by $\widehat N_-$ the union
$$\bigcup_{n\in\mathbb Z^3}\bigl(\text{\mancube}_0(n)\cup\text{\mancube}_1(n)\cup\text{\mancube}_2(n)\cup
\text{\mancube}_3(n)\bigr);$$
by $\widehat N_+$ the subset $\widehat N_+$ shifted by the vector $(1,1,1)$;\\
and by $N_-$ (respectively, $N_+$) the projection $p(\widehat N_-)\subset\mathbb T^3$ (respectively,
$p(\widehat N_+)$).

\smallskip

One can readily check the following:\smallskip\\
$N_-\cap N_+=\partial N_-=\partial N_+=M$, $N_-\cup N_+=\mathbb T^3$;\smallskip\\
$D(n)\subset\cube_0(n)$ and
$\mathcal S_i(n)\subset\cube_0(n)\cup\cube_i(n)\cup\cube_0(n+e_i)$ for all $n\in\mathbb Z^3$;\smallskip\\
the two sides of each $\mathcal S_i(n)$, $i=1,2,3$, $n\in\mathbb Z^3$,
that are not parallel to $(1,1,1)$ are orthogonal to $H$;\smallskip\\
the intersection $\Pi_a\cap\cube_0(n)$ is non-empty if and only if so is
$\Pi_a\cap D(n)$;\smallskip\\
the interiors of all the cubes $\cube_i(n)$, $i\in\{0,1,2,3\}$, $n\in\mathbb Z^3$
are pairwise disjoint;\smallskip\\
each $\cube_i(n)$, $i=1,2,3$, $n\in\mathbb Z^3$ shares a face with
$\cube_0(n)$ and with $\cube_0(n+e_i)$, and the rest of the boundary of
$\cube_i(n)$ is disjoint from all other cubes $\cube_j(m)$, $j\in\{0,1,2,3\}$, $m\in\mathbb Z^3$;\smallskip\\
the boundary of the polygon $\Pi_a\cap\cube_i(n)$, $i=1,2,3$, has
non-empty intersection with those of $\Pi_a\cap\cube_0(n)$ and $\Pi_a\cap\cube_0(n+e_i)$ which are not empty;\smallskip\\
the intersection $\Pi_a\cap S_i(n)$, $i=1,2,3$, is a straight line segment
connecting $\Pi_a\cap D(n)$ and $\Pi_a\cap D(n+e_i)$ if
$\Pi_a\cap D(n)\ne\varnothing\ne\Pi_a\cap D(n+e_i)$, and otherwise
empty.

\smallskip

Thus the intersection $\Pi_a\cap\widehat Z$ is a graph $\Gamma_a$ with the set of vertices
$$\Pi_a\cap\Bigl(\bigcup_{n\in\mathbb Z^3}D(n)\Bigr).$$
The intersection $\Pi_a\cap\widehat N_-$ has the following structure. It contains the union
of disjoint discs (some may be degenerate to a point)
$$\Pi_a\cap\Bigl(\bigcup_{n\in\mathbb Z^3}\cube_0(n)\Bigr)$$
in each of which there is a single vertex of $\Gamma_a$. We call these disks \emph{islands}.

The whole intersection $\Pi_a\cap\widehat N_-$ is obtained from the union of islands by
attaching disks of the form $\Pi_a\cap\cube_i(n)$, $i=1,2,3$, $n\in\mathbb Z^3$. Among
such disks there are some whose boundary has a single connected component of intersection with
an island. We call such disks \emph{capes}. An island with all adjoint capes attached is still a disk
containing a single vertex of $\Gamma_a$.

The boundary of any disk of the form $\Pi_a\cap\cube_i(n)$ that is not a cape has
exactly two connected components
in common with islands. We call such a disk \emph{a bridge}.

One can see that two islands are connected by a bridge
if and only if the corresponding vertices of $\Gamma_a$ are connected by an edge, see Fig.~\ref{section}.
\begin{figure}[ht]
\centerline{\includegraphics[scale=0.4]{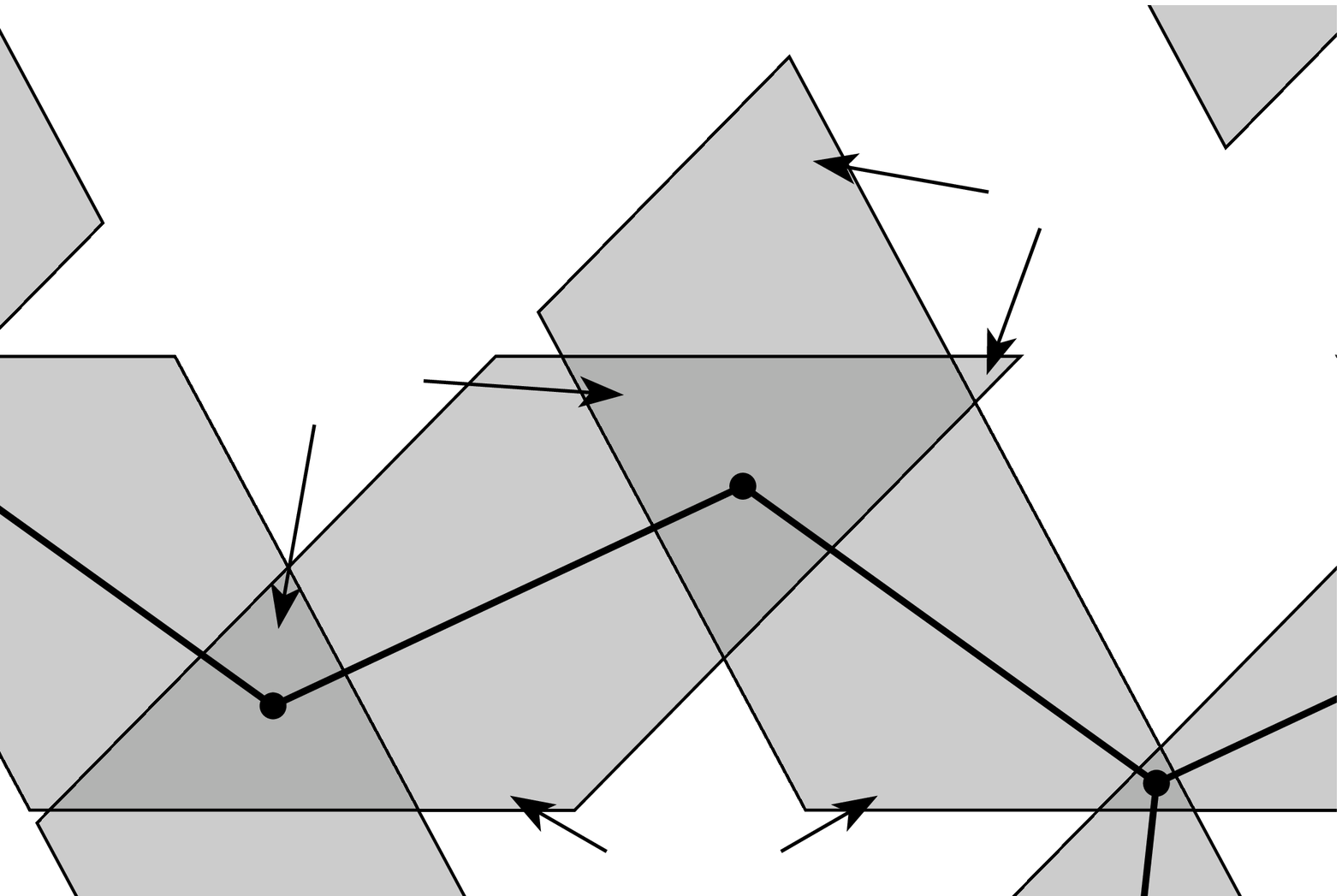}\put(-127,80){$\Pi_a\cap\cube_0(n)$}%
\put(-130,110){$\Pi_a\cap\cube_3(n)$}\put(-172,35){$\Pi_a\cap\cube_1(n)$}\put(-98,22){$\Pi_a\cap\cube_2(n)$}%
\put(-60,125){\small capes}\put(-195,90){\small islands}\put(-130,2){\small bridges}}

\vskip1cm

\centerline{\includegraphics[scale=0.5]{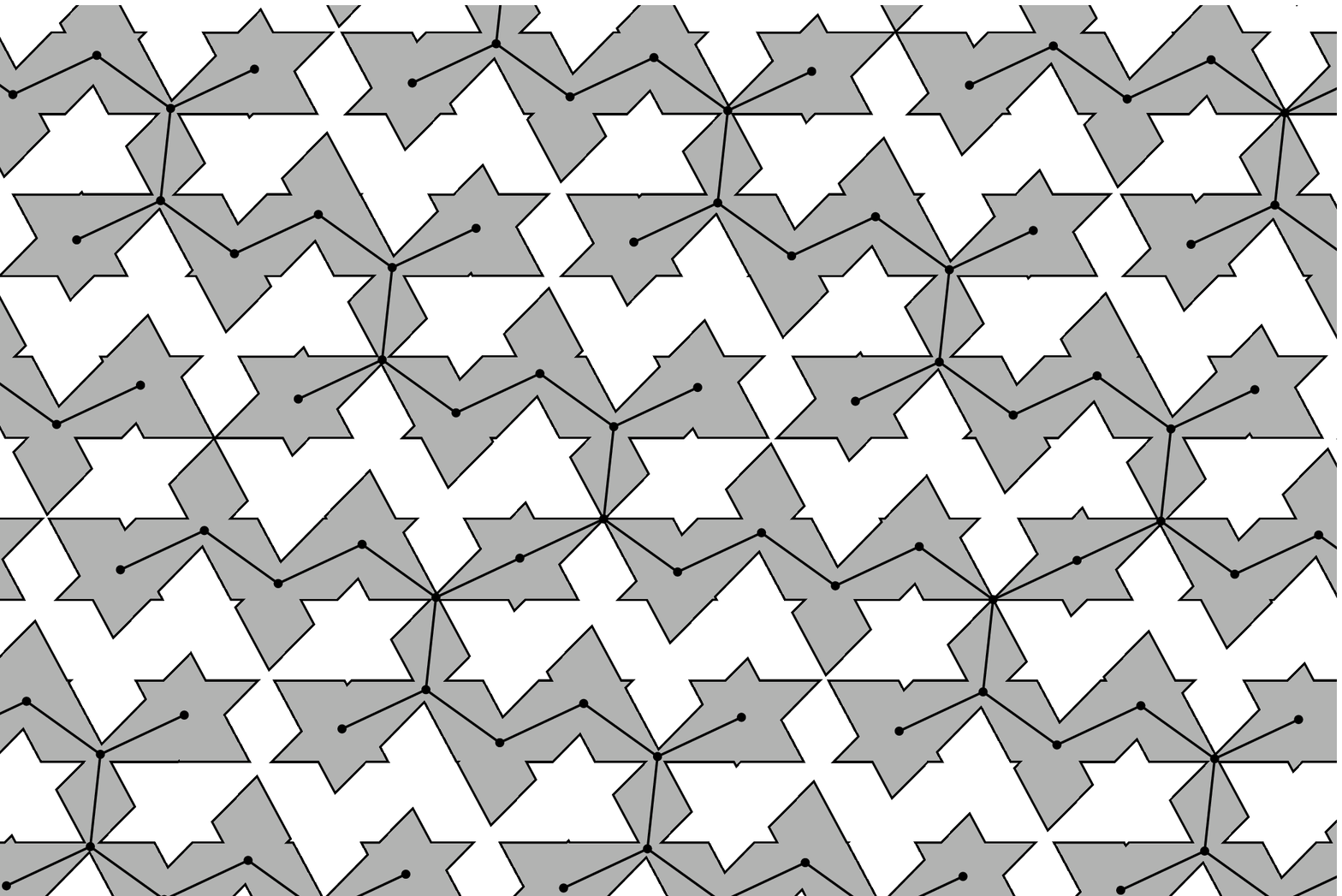}}
\caption{The intersections $\Pi_a\cap\widehat N_-$ and $\Pi_a\cap\widehat Z$}\label{section}
\end{figure}
Since all islands, capes, and bridges have uniformly bounded diameter the inclusion of any
component of $\Gamma_a$ into the corresponding component of $\Pi_a\cap\widehat N_-$ is
a quasi-isometry.

Connected components of $\Pi_a\cap\widehat N_-$ and of $\Gamma_a$ project under $p$ onto
leaves of $\mathcal F_{N_-}$ and $\mathcal F_Z$, respectively. The claim now follows from
Propositions~\ref{thin-criteria} and \ref{chaotic-criteria}.
\end{proof}

\begin{proposition}\label{123}
Let $k_0,k_1,\ldots$ be a sequence of natural numbers such that the series $\displaystyle\sum_{i=0}^\infty\frac1{k_i}$ converges.
Let $\overrightarrow{w_0},\overrightarrow{w_1},\overrightarrow{w_2},\ldots$ be defined as in Lemma~\ref{w-exist} and
$$H=\frac12\begin{pmatrix}0&1&1\\1&0&1\\1&1&0\end{pmatrix}\overrightarrow{w_0}.$$
Then:
\begin{enumerate}
\item the pair $(M,H)$ is chaotic;
\item the foliation $\mathcal F_M$ is not ergodic with respect to the transverse measure $|\eta|$
defined by the $1$-form~\eqref{eta}, there are two ergodic components;
\item almost all connected components of the sections $\Pi_a\cap\widehat M$ have an
asymptotic direction, which is,  up to sign, common for all of them.
\end{enumerate}
\end{proposition}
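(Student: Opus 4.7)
The plan is to derive all three claims from the band-complex analysis of Section 2, using the dictionary of Proposition \ref{thin-chaotic}.

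For claim (1), Proposition \ref{prop-two-end} shows that $Z(\overrightarrow{w_0},\overrightarrow{\ell_0})$ is annulus free and of thin type for any choice of the sequence $k_i$. The map $\psi$ from the proof of Theorem \ref{comp} takes leaves to leaves and preserves their quasi-isometry class, so by Proposition \ref{thin-criteria} the $3$-band complex $Z(\overrightarrow{w_0})$ is also of thin type. Proposition \ref{thin-chaotic} then yields chaoticity of $(M,H)$.

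For claim (2), I aim to exhibit an $\mathcal F_M$-invariant decomposition $|\eta|=\mu_++\mu_-$ into two mutually singular ergodic transverse measures. The hypothesis $\sum 1/k_i<\infty$ is strong enough to adapt the area estimate in the proof of Proposition \ref{prop-two-end} so that $\lim_i S_i>0$, whence almost every leaf of $\mathcal F_{Z(\overrightarrow{w_0},\overrightarrow{\ell_0})}$ is a two-ended tree. Via $\psi$ and the quasi-isometry between $Z(\overrightarrow{w_0})$-leaves and $\mathcal F_{N_-}$-leaves established in the proof of Proposition \ref{thin-chaotic}, almost every leaf of $\mathcal F_M$ lifts to a two-ended tree in $\widehat M$ whose two ends are interchanged by the central symmetry $\tau$. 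I then construct an $\mathcal F_M$-invariant, $\tau$-equivariant measurable partition $U_+\sqcup U_-$ of this full-measure set using the Rips tower $Z_0\supset Z_1\supset\ldots$: a point in $\bigcap_i Z_i$ is assigned a symbolic address recording on which side of the collapsed free arc it lies at each stage, and the $\tau$-antisymmetric tail of this address determines membership in $U_+$ or $U_-$. Setting $\mu_\pm:=|\eta|\big|_{U_\pm}$ gives the decomposition. Ergodicity of each $\mu_\pm$ follows from the unique ergodicity of the enhanced $4$-band complex asserted by Lemma \ref{w-exist}: any non-trivial refinement of $\mu_+$ or $\mu_-$ would pull back through $\psi$ to a non-trivial decomposition of the unique invariant measure on $Z(\overrightarrow{w_0},\overrightarrow{\ell_0})$, a contradiction.

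For claim (3), I apply Birkhoff's ergodic theorem on each ergodic component to the coordinate $1$-forms $dx_1,dx_2,dx_3$, obtaining vectors $v_\pm\in\mathbb R^3$ that record the mean displacement per unit $\mu_\pm$-mass of a typical leaf of $\mathcal F_M$ lifted to $\widehat M$. The relation $\mu_-=\tau_*\mu_+$, together with $\tau^*(dx_i)=-dx_i$, forces $v_-=-v_+$, so the two arms of a typical two-ended component of $\Pi_a\cap\widehat M$ are asymptotic to $\pm v_+$, and hence each such component has asymptotic direction in the common line $\mathbb R v_+$. The main obstacle is making the $\tau$-antisymmetric symbolic coding in claim (2) rigorous: one has to verify that the Rips-tower bookkeeping indeed produces a measurable, $\mathcal F_M$-invariant, and non-trivial partition. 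Once this is in place, the remainder of the argument is standard.
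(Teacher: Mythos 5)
Claim (1) is correct and follows the same route as the paper (Propositions~\ref{prop-two-end} and~\ref{thin-chaotic}).

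Your treatment of claim (2) has an internal contradiction. You propose to prove ergodicity of each $\mu_\pm$ by arguing that a nontrivial invariant refinement of $\mu_\pm$ would pull back, via $\psi$, to a nontrivial decomposition of the unique invariant measure on the $4$-band complex, contradicting the unique ergodicity asserted after Lemma~\ref{w-exist}. But the same reasoning applied to your own decomposition $|\eta|=\mu_++\mu_-$ would prove that $|\eta|$ itself is ergodic, which is the negation of claim~(2). The reason this kind of argument cannot be made to work is that the transverse measure structure of the band complex (a $3$-interval system, rigid by Lemma~\ref{w-exist}) does not control that of $\mathcal F_M$: the quasi-isometric inclusion $\widehat Z\subset\widehat N_-$ matches leaves coarsely but is not a measure-theoretic conjugacy, and $\mathcal F_M$ is seen by the paper to be governed by a $9$-interval exchange. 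Indeed the paper's mechanism is entirely different: Lemma~\ref{iit} writes down explicitly the interval exchange map that $\mathcal F_M$ induces on a system of three transversals, with the Rauzy--Veech recursion encoded by the matrices $R(k_i)$, and Lemma~\ref{uv} then shows the cone of invariant transverse measures is exactly two-dimensional, spanned by $\overrightarrow u$ and $\overrightarrow v$ with $\overrightarrow{x_0}=\overrightarrow u+\overrightarrow v$. A $\tau$-antisymmetric symbolic address on the Rips tower lives on the (uniquely ergodic) band complex and cannot substitute for this.

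Claim (3) has a second, independent gap. $\tau$-equivariance and Birkhoff give $v_-=-v_+$, but you never establish $v_+\ne0$, and that is the whole content of the claim. Because $\eta$ is the restriction of a closed $1$-form from $\mathbb T^3$, the asymptotic cycles satisfy $\iota_*(c_\mu+c_\nu)=0$ in $H_1(\mathbb T^3)$; as the paper recalls from Zorich, in a uniquely ergodic chaotic example the trajectories have no asymptotic direction precisely because of this vanishing. It is only the failure of unique ergodicity that makes $\iota_*(c_\mu)\ne0$ possible, and the paper has to prove this by a concrete intersection-number computation: assuming $\iota_*(c_\mu)=0$ forces the linear relations deduced from~\eqref{urelations}, which define a proper $R(k)^{\pm1}$-invariant subspace that $\overrightarrow{u_\infty}$ visibly fails to lie in. Without that computation there is no asymptotic direction to exhibit, so the step is not, as you write, ``standard once the partition is in place''; it is the point.
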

\begin{proof}
The first claim follows from Propositions~\ref{prop-two-end} and \ref{thin-chaotic}.
It is also a corollary to Lemma~\ref{iit} below.

Let $\xi$ be an oriented simple arc transverse to $\mathcal F_M$ such that $a=\int_\xi\eta>0$.
For any $b\in(0,a]$ we denote by $\xi(b)$ the initial subarc of $\xi$ such that
$\int_{\xi(b)}\eta=b$.

Let $\widehat\xi_1,\widehat\xi_2,\widehat\xi_3$ be the transversals of $\mathcal F_{\widehat M}$,
starting at $(0,0,0)$ composed of the following straight line segments
$$\begin{aligned}
\widehat\xi_1&:[(0,0,0),(0,0,1)]\cup[(0,0,1),(0,1,1)],\\
\widehat\xi_2&:[(0,0,0),(1,0,0)]\cup[(1,0,0),(1,0,1)]\cup[(1,0,1),(1,1,1)],\\
\widehat\xi_3&:[(0,0,0),(0,1,0)]\cup[(0,1,0),(1,1,0)]\cup[(1,1,0),(1,1,1)],
\end{aligned}$$
and let $\xi_i=p(\widehat{\xi_i})$, $i=1,2,3$. We have
$$\int_{\xi_1}\eta=w_{10}+\frac{w_{20}+w_{30}}2,\quad\int_{\xi_2}\eta=\int_{\xi_3}\eta=w_{10}+w_{20}+w_{30},$$
so we have $\int_{\xi_i}\eta>w_{10}$ for all $i=1,2,3$.

Let $R(k)$ be the matrix of the following linear transformation of $\mathbb R^9$:
$$\begin{pmatrix}x_1\\x_2\\x_3\\x_4\\x_5\\x_6\\x_7\\x_8\\x_9\end{pmatrix}\mapsto
\begin{pmatrix}(k-1)(x_2+x_3+x_7+x_8+x_9)+x_7\\
x_8\\x_2+x_9\\x_3+x_7\\x_1+x_2+x_3+x_4\\
(k-1)(x_2+x_3+x_7+x_8+x_9)+x_2+x_3+x_7\\
x_5\\x_2+x_3+x_6\\
(k-1)(x_1+x_2+x_3+x_4+x_8)+x_4
\end{pmatrix}.$$

\begin{lemma}\label{iit}
The first return map defined on 
\begin{equation}\label{transversals}
\xi_1(w_{1i})\cup\xi_2(w_{1i})\cup\xi_3(w_{1i})
\end{equation}
by the foliation $\mathcal F_M$
(for a proper orientation of leaves) endowed
with the invariant measure $|\eta|$ is an interval exchange map with permutation
\begin{equation}\label{permutation}
\Bigl(\,\begin{matrix}1\ 2\ 3\ 4\\3\ 7\ 6\end{matrix}\,\Bigr|\,
\begin{matrix}5\ 6\\4\ 8\ 1\end{matrix}\,\Bigr|\,
\begin{matrix}7\ 8\ 9\\9\ 2\ 5\end{matrix}\,\Bigr)
\end{equation}
and vector of parameters
\begin{equation}\label{w->x}
\overrightarrow{x_i}=(w_{1i}-w_{2i}-w_{3i},\ w_{3i},\ w_{2i}-w_{3i},\ w_{3i},\ w_{2i},\ w_{1i}-w_{2i},
\ w_{3i},\ w_{2i},\ w_{1i}-w_{2i}-w_{3i})^\intercal.
\end{equation}

Let $\mu$ be another invariant transverse measure for $\mathcal F_M$, and let $\overrightarrow{y_i}$ be
the vector of parameters of the corresponding interval exchange map induced on
the union of transversals~\eqref{transversals} (with the same numbering as for $\overrightarrow{x_i}$).
Then for all $i=0,1,2,3,\ldots$ we have
\begin{equation}\label{invmeasures}
\overrightarrow{y_i}{}_{\kern-0.1em+1}=R(k_i)\overrightarrow{y_i},\quad\overrightarrow{y_i}\in V.
\end{equation}
Any sequence $\overrightarrow{y_0},\overrightarrow{y_1},\ldots\in\mathbb R^9_+$ satisfying~\eqref{invmeasures}
defines an invariant transverse measure for~$\mathcal F_M$.
\end{lemma}

We refer the reader to \cite{Vi06} for a detailed account on interval exchange transformations and on the Rauzy--Veech
induction. Here we use a slightly modified version of the standard construction by taking a union of
three transverse arcs instead of just one. That's why we subdivided each row in~\eqref{permutation} into
three blocks that correspond to $\xi_1(w_{1k})$, $\xi_2(w_{1k})$, and $\xi_3(w_{1k})$ (not in this order if $k\ne0\,(\mathrm{mod}3)$).

\begin{proof}
Note that by definition of $\overrightarrow{w_k}$ we always have $w_{1k}>w_{2k}+w_{3k}$ and $w_{2k}>w_{3k}$.

For $k=0$ the claim of the Lemma is obtained by a direct routine check. 
The surface $M$ is cut into $9$ strips each foliated by arcs.
Preimages of the strips in $\mathbb R^3$ are shown in Fig.~\ref{strips}.
\begin{figure}[ht]
\centerline{\includegraphics{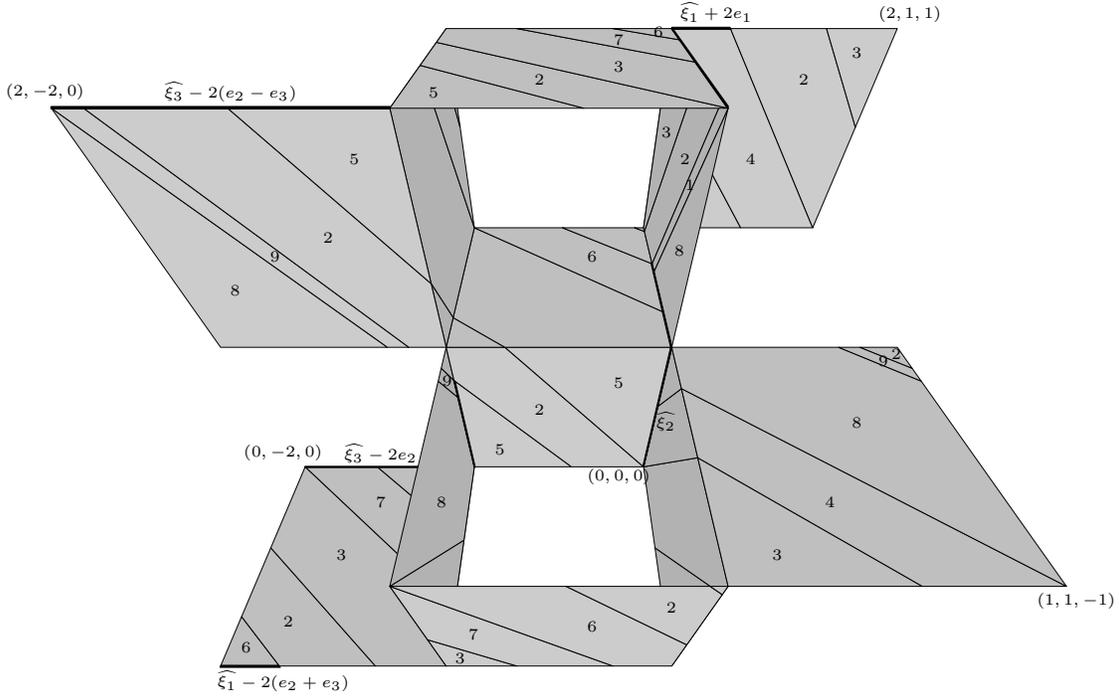}\put(-190,80){\tiny$(0,0,0)$}\put(-320,89){\tiny$(0,-2,0)$}\put(-80,255){\tiny$(2,1,1)$}%
\put(-410,226){\tiny$(2,-2,0)$}\put(-20,33){\tiny$(1,1,-1)$}%
\put(-153,190){\tiny$1$}\put(-155,200){\tiny$2$}\put(-160,30){\tiny$2$}\put(-305,25){\tiny$2$}\put(-110,230){\tiny$2$}\put(-210,230){\tiny$2$}%
\put(-210,105){\tiny$2$}\put(-290,170){\tiny$2$}\put(-162,210){\tiny$3$}\put(-120,50){\tiny$3$}\put(-285,50){\tiny$3$}\put(-90,240){\tiny$3$}%
\put(-240,11){\tiny$3$}\put(-180,235){\tiny$3$}\put(-130,200){\tiny$4$}\put(-100,70){\tiny$4$}\put(-180,115){\tiny$5$}\put(-280,200){\tiny$5$}%
\put(-250,225){\tiny$5$}\put(-225,90){\tiny$5$}\put(-190,163){\tiny$6$}\put(-190,23){\tiny$6$}\put(-165,248){\tiny$6$}\put(-321,15){\tiny$6$}%
\put(-270,70){\tiny$7$}\put(-235,20){\tiny$7$}\put(-180,245){\tiny$7$}\put(-247,70){\tiny$8$}\put(-90,100){\tiny$8$}\put(-325,150){\tiny$8$}%
\put(-157,165){\tiny$8$}\put(-75,126){\tiny$2$}\put(-245,116){\tiny$9$}\put(-80,123.5){\tiny$9$}\put(-310,163){\tiny$9$}%
\put(-164,100){\tiny$\widehat{\xi_2}$}\put(-282,88){\tiny$\widehat{\xi_3}-2e_2$}\put(-155,255){\tiny$\widehat{\xi_1}+2e_1$}%
\put(-330,2){\tiny$\widehat{\xi_1}-2(e_2+e_3)$}\put(-350,225){\tiny$\widehat{\xi_3}-2(e_2-e_3)$}}
\caption{Cutting $M$ into $9$ strips}\label{strips}
\end{figure}

Let $\overrightarrow x\in V$ and $\overrightarrow y=R(k)\overrightarrow x$, $k\in\mathbb N$. We claim that
we can run the Rauzy--Veech induction starting from the permutation~\eqref{permutation} and the
vector of parameters $\overrightarrow y$ so that to obtain after several steps an interval exchange map
with the same permutation and the vector of parameters $\overrightarrow x$. Relations~\eqref{match}
guarantee that, for any $i=1,2,3$, the sum of parameters corresponding to the $i$th block is
the same for the top and bottom rows of the permutation.

The procedure will be slightly more general than usually since we are using three transversals instead of
one. This simply means that we can exchange the blocks synchronously in both rows, so, the process
is not uniquely defined by the initial data. Fig.~\ref{rauzy} shows how the Rauzy--Veech induction
can be run. The transition between any two subsequent lines is the result of several steps
of the ordinary Rauzy--Veech induction with the same winner or just a permutation of blocks.
Each line displays the current permutation, the vector of parameters, and relations (if any) used to
obtain the subsequent transition.

\begin{figure}[ht]
$$\begin{aligned}
&\Bigl(\,\begin{matrix}1\ 2\ 3\ 4\\3\ 7\ 6\end{matrix}\,\Bigr|\,
\begin{matrix}5\ 6\\4\ 8\ 1\end{matrix}\,\Bigr|\,
\begin{matrix}7\ 8\ 9\\9\ 2\ 5\end{matrix}\,\Bigr)
\ &&
(y_1,y_2,y_3,y_4,y_5,y_6,y_7,y_8,y_9)\ &&
y_9=x_4+(k-1)(y_2+y_5)\\
&\Bigl(\,\begin{matrix}1\ 2\ 3\ 4\\3\ 7\ 6\end{matrix}\,\Bigr|\,
\begin{matrix}5\ 6\\4\ 8\ 1\end{matrix}\,\Bigr|\,
\begin{matrix}7\ 8\ 9\\9\ 2\ 5\end{matrix}\,\Bigr)
\ &&
(y_1,y_2,y_3,y_4,y_5,y_6,y_7,y_8,x_4)\ &&
y_5=x_1+x_2+x_3+x_4\\
&\Bigl(\,\begin{matrix}1\ 2\ 3\ 4\\3\ 7\ 6\end{matrix}\,\Bigr|\,
\begin{matrix}5\ 9\ 6\\4\ 8\ 1\end{matrix}\,\Bigr|\,
\begin{matrix}7\ 8\\9\ 2\ 5\end{matrix}\,\Bigr)
\ &&
(y_1,y_2,y_3,y_4,x_1+x_2+x_3,y_6,y_7,y_8,x_4)\ &&
\\
&\Bigl(\,\begin{matrix}7\ 8\\9\ 2\ 5\end{matrix}\,\Bigr|\,
\begin{matrix}1\ 2\ 3\ 4\\3\ 7\ 6\end{matrix}\,\Bigr|\,
\begin{matrix}5\ 9\ 6\\4\ 8\ 1\end{matrix}
\,\Bigr)
\ &&
(y_1,y_2,y_3,y_4,x_1+x_2+x_3,y_6,y_7,y_8,x_4)\ &&
y_6=y_1+x_2+x_3\\
&\Bigl(\,\begin{matrix}7\ 8\\9\ 2\ 5\end{matrix}\,\Bigr|\,
\begin{matrix}1\ 2\ 3\ 4\\3\ 7\ 6\ 1\end{matrix}\,\Bigr|\,
\begin{matrix}5\ 9\ 6\\4\ 8\end{matrix}
\,\Bigr)
\ &&
(y_1,y_2,y_3,y_4,x_1+x_2+x_3,x_2+x_3,y_7,y_8,x_4)\ &&y_8=x_2+x_3+x_6
\\
&\Bigl(\,\begin{matrix}7\ 8\ 6\\9\ 2\ 5\end{matrix}\,\Bigr|\,
\begin{matrix}1\ 2\ 3\ 4\\3\ 7\ 6\ 1\end{matrix}\,\Bigr|\,
\begin{matrix}5\ 9\\4\ 8\end{matrix}
\,\Bigr)
\ &&
(y_1,y_2,y_3,y_4,x_1+x_2+x_3,x_2+x_3,y_7,x_6,x_4)\ &&\\
&\Bigl(\,\begin{matrix}5\ 9\\4\ 8\end{matrix}\,\Bigr|\,
\begin{matrix}7\ 8\ 6\\9\ 2\ 5\end{matrix}\,\Bigr|\,
\begin{matrix}1\ 2\ 3\ 4\\3\ 7\ 6\ 1\end{matrix}
\,\Bigr)
\ &&
(y_1,y_2,y_3,y_4,x_1+x_2+x_3,x_2+x_3,y_7,x_6,x_4)\ &&y_1=(k-1)(y_2+y_3+y_4)+x_7\\
&\Bigl(\,\begin{matrix}5\ 9\\4\ 8\end{matrix}\,\Bigr|\,
\begin{matrix}7\ 8\ 6\\9\ 2\ 5\end{matrix}\,\Bigr|\,
\begin{matrix}1\ 2\ 3\ 4\\3\ 7\ 6\ 1\end{matrix}
\,\Bigr)
\ &&
(x_7,y_2,y_3,y_4,x_1+x_2+x_3,x_2+x_3,y_7,x_6,x_4)\ &&y_4=x_3+x_7\\
&\Bigl(\,\begin{matrix}5\ 9\\4\ 1\ 8\end{matrix}\,\Bigr|\,
\begin{matrix}7\ 8\ 6\\9\ 2\ 5\end{matrix}\,\Bigr|\,
\begin{matrix}1\ 2\ 3\ 4\\3\ 7\ 6\end{matrix}
\,\Bigr)
\ &&
(x_7,y_2,y_3,x_3,x_1+x_2+x_3,x_2+x_3,y_7,x_6,x_4)\ &&y_2=x_8,\ y_7=x_5
\\
&\Bigl(\,\begin{matrix}5\ 9\\4\ 1\ 8\end{matrix}\,\Bigr|\,
\begin{matrix}7\ 8\ 6\ 4\\9\ 2\ 5\end{matrix}\,\Bigr|\,
\begin{matrix}1\ 2\ 3\\3\ 7\ 6\end{matrix}
\,\Bigr)
\ &&
(x_7,x_8,y_3,x_3,x_1+x_2+x_3,x_2,x_5,x_6,x_4)\ &&y_3=x_2+x_9
\\
&\Bigl(\,\begin{matrix}5\ 9\\4\ 1\ 8\end{matrix}\,\Bigr|\,
\begin{matrix}7\ 8\ 6\ 4\\9\ 2\ 5\end{matrix}\,\Bigr|\,
\begin{matrix}1\ 2\ 3\\3\ 6\ 7\end{matrix}
\,\Bigr)
\ &&
(x_7,x_8,x_9,x_3,x_1+x_2+x_3,x_2,x_5,x_6,x_4)\ &&
\\
&\Bigl(\,\begin{matrix}5\ 9\\4\ 1\ 8\end{matrix}\,\Bigr|\,
\begin{matrix}7\ 8\ 6\ 4\\9\ 2\ 5\end{matrix}\,\Bigr|\,
\begin{matrix}1\ 2\ 3\\3\ 6\ 7\end{matrix}
\,\Bigr)
\ &&
(x_7,x_8,x_9,x_3,x_1+x_2+x_3,x_2,x_5,x_6,x_4)\ &&
\\
&\Bigl(\,\begin{matrix}1\ 2\ 3\\3\ 6\ 7\end{matrix}\,\Bigr|\,
\begin{matrix}5\ 9\\4\ 1\ 8\end{matrix}\,\Bigr|\,
\begin{matrix}7\ 8\ 6\ 4\\9\ 2\ 5\end{matrix}
\,\Bigr)
\ &&
(x_7,x_8,x_9,x_3,x_1+x_2+x_3,x_2,x_5,x_6,x_4)\ &&
\\
&\Bigl(\,\begin{matrix}1\ 2\ 3\\3\ 6\ 7\end{matrix}\,\Bigr|\,
\begin{matrix}5\ 6\ 4\ 9\\4\ 1\ 8\end{matrix}\,\Bigr|\,
\begin{matrix}7\ 8\\9\ 2\ 5\end{matrix}
\,\Bigr)
\ &&
(x_7,x_8,x_9,x_3,x_1,x_2,x_5,x_6,x_4)\ &&
\\
&\Bigl(\,
\begin{matrix}5\ 6\ 4\ 9\\4\ 1\ 8\end{matrix}\,\Bigr|\,
\begin{matrix}7\ 8\\9\ 2\ 5\end{matrix}\,\Bigr|\,
\begin{matrix}1\ 2\ 3\\3\ 6\ 7\end{matrix}\,\Bigr)
\ &&
(x_7,x_8,x_9,x_3,x_1,x_2,x_5,x_6,x_4)\ &&
\\\end{aligned}$$
\caption{Running the Rauzy--Veech induction}\label{rauzy}
\end{figure}

After reordering parameters in the last line we get the original permutation with the same subdivision into blocks.

It remains to check that vectors $\overrightarrow{x_i}$ defined by~\eqref{w->x} (written as columns) satisfy
\begin{equation}\label{x=Rx}
\overrightarrow{x_i}=R(k_i)\overrightarrow{x_i}{}_{\kern-0.1em+1}
\end{equation}
for all $i\geqslant0$,
which is straightforward.
\end{proof}

Let $V$ be the subset of $\mathbb R_+^9$ defined by the equations
\begin{equation}\label{match}
x_1+x_4-x_6=x_5-x_8=x_7-x_2.
\end{equation}
It is invariant under $R(k)$ for any $k>0$.

For $i\geqslant0$ denote
$$V_i=R(k_0)R(k_1)\ldots R(k_i)(V),\qquad V_\infty=\bigcap_{i=0}^\infty V_i.$$
We obviously have $V\supset V_0\supset V_1\supset\ldots$.

\begin{lemma}\label{uv}
The subset $V_\infty$ has the form
\begin{equation}\label{vinfty}
V_\infty=\{\alpha\overrightarrow u+\beta\overrightarrow v\;;\;\alpha,\beta\geqslant0,\ (\alpha,\beta)\ne(0,0)\}
\end{equation}
for some non-collinear $\overrightarrow u,\overrightarrow v\in V$. They can be chosen so as to have
$\overrightarrow{x_0}=\overrightarrow u+\overrightarrow v$.
\end{lemma}

\begin{proof}
The matrix $R(k)$ can be written in the form $R(k)=kR'+R''$ with $R'$, $R''$ not depending on $k$, in a unique way.
We will have
$$(R')^4=(R')^2.$$
Since the series $\displaystyle\sum_{i=0}^\infty\frac1{k_i}$ converges this implies that the limit
$$\widetilde R=\lim_{i\rightarrow\infty}\frac{R(k_0)}{k_0}\frac{R(k_1)}{k_1}\ldots\frac{R(k_{2i-1})}{k_{2i-1}}=
\lim_{i\rightarrow\infty}\Bigl(R'+\frac1{k_0}R''\Bigr)\Bigl(R'+\frac1{k_1}R''\Bigr)\ldots \Bigl(R'+\frac1{k_{2i-1}}R''\Bigr)$$
exists and satisfies the relation 
\begin{equation}\label{rtilde}\widetilde R\cdot(R')^2=\widetilde R.\end{equation}

The product of six matrices of the form $R(k)$ with $k>0$ has only positive entries. This implies $(V_i\cup\{0\})\supset\overline V_{\kern-0.2emi+6}$ and
$$V_\infty\cup\{0\}=\bigcap_{i=0}^\infty\overline V_{\kern-0.2emi}=\overline V_{\kern-0.2em\infty}.$$
Together with~\eqref{rtilde} this gives
$$V_\infty\cup\{0\}=\widetilde R(\overline V)=\widetilde RR'(\overline V).$$
Denote: $\overrightarrow{u_\infty}=(1,0,0,0,0,1,0,0,0)^\intercal$, $\overrightarrow{v_\infty}=(0,0,0,0,0,0,0,0,1)^\intercal$.
One easily checks the following
$$R'(\overline V)=\{\alpha\overrightarrow{u_\infty}+\beta\overrightarrow{v_\infty}\;;\;\alpha,\beta\geqslant0\},
\quad R'\overrightarrow{u_\infty}=\overrightarrow{v_\infty},\quad R'\overrightarrow{v_\infty}=\overrightarrow{u_\infty}.$$
Thus, \eqref{vinfty} holds for
$$\overrightarrow u=\widetilde R\overrightarrow{u_\infty},\text{\quad and\quad}\overrightarrow v=\widetilde R\overrightarrow{v_\infty}.$$

The matrix $R(k)$ is invertible for all $k$, so we can set $\overrightarrow{u_0}=\overrightarrow u$, $\overrightarrow{v_0}=\overrightarrow v$
\begin{equation}\label{uvidef}
\overrightarrow{u_i}{}_{\kern-0.1em+1}=k_iR(k_i)^{-1}\overrightarrow{u_i},\quad
\overrightarrow{v_i}{}_{\kern-0.1em+1}=k_iR(k_i)^{-1}\overrightarrow{v_i}\quad\text{for }i\geqslant0.
\end{equation}
We will have
$$\lim_{i\rightarrow\infty}\overrightarrow{u_{2i}}=\lim_{i\rightarrow\infty}\overrightarrow{v_2}_{\kern-0.1em i+1}=
\overrightarrow{u_\infty},\quad
\lim_{i\rightarrow\infty}\overrightarrow{v_{2i}}=\lim_{i\rightarrow\infty}\overrightarrow{u_2}_{\kern-0.1em i+1}=
\overrightarrow{v_\infty},$$
which implies that $\overrightarrow u$ and $\overrightarrow v$ are not collinear.

Now $\overrightarrow{x_0}\in V_\infty$, so, $\overrightarrow{x_0}$ is a non-trivial linear combination $\alpha\overrightarrow u+
\beta\overrightarrow v$. From~\eqref{x=Rx} and \eqref{uvidef} we have
$$\overrightarrow{x_2}_{\kern-0.1em i}=\frac1{\prod_{j=0}^{2i-1}k_i}(\alpha\overrightarrow{u_2}_{\kern-0.1em i}+
\beta\overrightarrow{v_2}_{\kern-0.1em i}).$$
From the definition of $\overrightarrow{w_i}$ (see Lemma~\ref{w-exist}) it follows that
$$\lim_{i\rightarrow\infty}\frac{w_{2i}}{w_{1i}}=\lim_{i\rightarrow\infty}\frac{w_{3i}}{w_{1i}}=0$$
if $k_i\rightarrow\infty$. Together with~\eqref{w->x} this implies
$$\overrightarrow{x_i}=w_{1i}\bigl((1,0,0,0,0,1,0,0,1)^\intercal+o(1)\bigr)=
w_{1i}\bigl(\overrightarrow{u_\infty}+\overrightarrow{v_\infty}+o(1)\bigr)\quad(i\rightarrow\infty),$$
hence $\alpha=\beta$, and by rescaling $\overrightarrow{x_0}$ we can make $\alpha=\beta=1$.
\end{proof}

We can now finilize the proof of Proposition~\ref{123}. It follows from Lemmas~\ref{iit} and \ref{uv} that
$\mathcal F_M$ admits two invariant ergodic transverse measures, $\mu$ and $\nu$, say,
that correspond to the vectors $\overrightarrow u$ and $\overrightarrow v$ from Lemma~\ref{uv},
and, for an appropriate normalization, we have $|\eta|=\mu+\nu$. Let $c_\mu,c_\nu\in H_1(M;\mathbb R)$
be the asymptotic cycles of $\mu$ and $\nu$, respectively (see~\cite{zor99} for the definition),
and $\mathcal E_\mu$, $\mathcal E_\nu$ the respective ergodic components of $\mathcal F_M$.

Denote by $\iota$ the inclusion $M\hookrightarrow\mathbb T^3$. Since $\eta$ is
a restriction of a closed $1$-form in $\mathbb T^3$, we have 
\begin{equation}\label{mu+nu}
\iota_*(c_\mu+c_\nu)=\iota(\eta^*)=0\in H_1(\mathbb T^3),
\end{equation}
where $\eta^*\in H_1(M)$ is the Poincare dual of the cohomology class of $\eta$.
We claim that $\iota_*(c_\mu)\ne0\ne\iota_*(c_\nu)$, which implies the assertion of the Proposition
about the existence of an asymptotic direction.

Indeed, suppose on the contrary that $\iota_*(c_\mu)=0$. Then for any $1$-cycle $c$ on $M$
that is null-homologous in $\mathbb T^3$ we must have $c\frown c_\mu=0$.
Let $c=\xi_2-\xi_3$, where $\xi_{2,3}$ are oriented transversals of $\mathcal F$ introduced in the proof
of Proposition~\ref{123} (consult Fig.~\ref{strips}, where initial portions of $\xi_{2,3}$ are shown).
The cycle $c$ is homologous to zero in $\mathbb T^3$, so we must have
$\int_{\xi_2}\mu=\int_{\xi_3}\mu$. Similarly, we must have
$\int_{\xi_2}\mu=\int_{\xi_4}\mu=\int_{\xi_5}\mu$, where
$$\begin{aligned}\xi_4&=p\bigl([(0,0,0),(0,0,1)]\cup[(0,0,1),(1,0,1)]\cup[(1,0,1)\cup(1,1,1)]\bigr),\\
\xi_5&=p\bigl([(0,0,0),(0,0,1)]\cup[(0,0,1),(0,1,1)]\cup[(0,1,1)\cup(1,1,1)]\bigr).
\end{aligned}$$
We have
\begin{equation}\begin{aligned}\label{urelations}
\int_{\xi_2}\mu&=u_2+u_3+u_4+u_5+u_6,&
\int_{\xi_3}\mu&=u_2+u_5+u_7+u_8+u_9,\\
\int_{\xi_4}\mu&=u_2+2u_3+u_4+u_6+u_7,&
\int_{\xi_5}\mu&=u_2+2u_3+u_6+2u_7.
\end{aligned}\end{equation}
So, $\overrightarrow u$ must satisfy the relations:
\begin{equation}
u_3+u_6=u_8+u_9,\quad u_5=u_3+u_7,\quad u_4=u_7.
\end{equation}
It must also satisfy~\eqref{match} (with $x_i$ replaced by $u_i$). The subspace in $\mathbb R^9$
defined by all these equations is invariant under $R(k)^{\pm1}$. Therefore,
they must hold true also for $\overrightarrow {u_\infty}$, but
the first relation in~\eqref{urelations} does not. Contradiction.

It follows from~\eqref{mu+nu} that $\iota_*(c_\mu)=-\iota_*(c_\nu)$, which implies that the asymptotic
direction of trajectories for $\mathcal E_\mu$ will be opposite to the one for $\mathcal E_\nu$.
\end{proof}

The hypothesis on the sequence $(k_i)$ in Proposition~\ref{123} is much weaker than in Proposition~\ref{prop-two-end}.
One can show that it can be weakened in Proposition~\ref{prop-two-end}, too, by deducing it from Proposition~\ref{123},
but the argument will be less straightforward.

We expect that all thin type band copmlexes with three bands give rise,
through the construction of~\cite{dyn08}, to
a chaotic dynamics in Novikov's problem with almost all trajectories having
an asymptotic direction, but don't see a rigorous proof of that so far.

\end{document}